\DeclareMathOperator{\ad}{ad}
\DeclareMathOperator{\Id}{Id}
\DeclareMathOperator{\const}{const}
\DeclareMathOperator{\diag}{diag}
\DeclareMathOperator{\Ad}{Ad}
\DeclareMathOperator{\Aut}{Aut}
\DeclareMathOperator{\Ker}{Ker}
\DeclareMathOperator{\Imm}{Im}
\DeclareMathOperator{\Lie}{Lie}
\renewenvironment{proof}[1][Proof]{\textbf{#1.} }
{\ \rule{0.5em}{0.5em}}
\newtheorem{theorem}{Theorem}
\newtheorem{prop}{Proposition}
\newtheorem{lemma}{Lemma}
\newtheorem{corollary}{Corollary}
\newtheorem{conjecture}{Conjecture}
\theoremstyle{definition}
\newtheorem{remark}{Remark}
\begin{document}

\title
[Spectral properties of Killing vector fields of constant length] {Spectral properties of Killing vector fields \\ of constant length}

\author{Yu.G.~Nikonorov}
\address{Yu.\,G. Nikonorov \newline
Southern Mathematical Institute of Vladikavkaz Scientific Centre \newline
of the Russian Academy of Sciences, Vladikavkaz, Markus st. 22, \newline
362027, Russia}
\email{nikonorov2006@mail.ru}

\begin{abstract}
This paper is devoted to the study of properties of Killing vector fields of constant length on Riemannian manifolds.
If $\mathfrak{g}$ is a Lie algebra of Killing vector fields on a given Riemannian manifold $(M,g)$,
and $X\in \mathfrak{g}$ has constant length on $(M,g)$, then we prove that the linear operator $\ad(X):\mathfrak{g} \rightarrow \mathfrak{g}$
has a pure imaginary spectrum. More detailed structure results on the corresponding operator $\ad(X)$ are obtained.
Some special examples of vector fields of constant length are constructed.

\vspace{2mm} \noindent 2010 Mathematical Subject Classification:
53C20, 53C25, 53C30.

\vspace{2mm} \noindent Key words and phrases: geodesic orbit space, homogeneous Riemannian space, Killing vector field of constant length.
\end{abstract}

\maketitle

\section{Introduction}

We study general properties of a given Killing vector field of constant length  $X$ on an arbitrary Riemannian manifold $(M,g)$.
A comprehensive survey on classical results in this direction could be found in \cite{BerNik2008, BerNikClif}.
Important properties of Killing vector fields of constant length (abbreviated as KVFCL) on compact homogeneous Riemannian spaces are studied in \cite{Nik2015}.
Some resent results about Killing vector field of constant length on some special Riemannian manifolds  are obtained in
\cite{WolfPodXu2017, XuWolf2016}. All manifolds in this paper are supposed to be connected.

Let us consider a Riemannian manifold $(M,g)$ and
any Lie group $G$ acting effectively on $(M,g)$ by isometries.
We will identify the Lie algebra $\mathfrak{g}$ of $G$ with the corresponded Lie algebra of Killing vector field on $(M,g)$ as follows.
For any $U\in \mathfrak{g}$ we consider a one-parameter group $\exp(tU)\subset G$ of isometries of $(M,g)$ and define a Killing vector field $\widetilde{U}$
by a usual formula
\begin{equation}\label{kfaction}
\widetilde{U} (x) = \left.\frac{d}{dt}\exp(tU)(x)\right|_{t=0} .
\end{equation}
It is clear that the map $U \rightarrow \widetilde{U}$ is linear and injective, but $[\widetilde{U},\widetilde{V}]= -\widetilde{[U,V]_{\mathfrak{g}}}$,
where $[\cdot,\cdot]_{\mathfrak{g}}$ is the Lie bracket in $\mathfrak{g}$ and $[\cdot,\cdot]$ is the Lie bracket of vector fields on $M$.
We will use this identification repeatedly in this paper.

Any $X\in \mathfrak{g}$ determines a linear operator $\ad(X):\mathfrak{g} \rightarrow \mathfrak{g}$ acting by $Y \mapsto [X,Y]$.
If we consider $X$ as a Killing vector field on $(M,g)$, then some geometric type assumptions on $X$ imply special properties (in particular, spectral properties)
of the corresponding operator $\ad(X)$. In this paper, we study the property of $X$ to be of constant length.

For a Lie algebra $\mathfrak{g}$, we denote by $\mathfrak{n(g)}$ and $\mathfrak{r(g)}$ the nilradical (the maximal nilpotent ideal)
and the radical of $\mathfrak{g}$ respectively.
A maximal semi-simple subalgebra of $\mathfrak{g}$ is called a Levi factor or a~Levi subalgebra.
There is a semidirect decomposition $\mathfrak{g}=\mathfrak{r(g)}\rtimes \mathfrak{s}$, where $\mathfrak{s}$ is an arbitrary Levi factor.
The Malcev--Harish-Chandra theorem
states that any two Levi factors of $\mathfrak{g}$ are conjugate by an automorphism $\exp(\Ad (Z))$
of $\mathfrak{g}$, where $Z$ is in the nilradical $\mathfrak{n(g)}$ of~$\mathfrak{g}$.
We have $\mathfrak{r(g)}=[\mathfrak{s},\mathfrak{r(g)}] \oplus C_{\mathfrak{r(g)}}(\mathfrak{s})$ (a direct sum of linear subspaces),
where $C_{\mathfrak{r(g)}}(\mathfrak{s})$ is the centralizer of
$\mathfrak{s}$ in $\mathfrak{r(g)}$.
Recall also that $[\mathfrak{g},\mathfrak{r(g)}]\subset \mathfrak{n(g)}$, therefore,
$[\mathfrak{s},\mathfrak{r(g)}]\subset [\mathfrak{g},\mathfrak{r(g)}]\subset \mathfrak{n(g)}$. Moreover,
$D(\mathfrak{r(g)})\subset \mathfrak{n(g)}$ for every derivation
$D$ of $\mathfrak{g}$. For any Levi factor $\mathfrak{s}$, we have
$[\mathfrak{g},\mathfrak{g}]=[\mathfrak{r(g)}+\mathfrak{s},\mathfrak{r(g)}+\mathfrak{s}]=[\mathfrak{g},\mathfrak{r(g)}]\rtimes \mathfrak{s}
\subset \mathfrak{n(g)}\rtimes \mathfrak{s}$.
For a more detailed discussion of the Lie algebra structure we refer to \cite{HilNeeb}.

\smallskip

Recall that a subalgebra $\mathfrak{k}$ of a Lie algebra $\mathfrak{g}$ is said to
be {\it compactly embedded in} $\mathfrak{g}$ if $\mathfrak{g}$ admits an inner product relative to which the
operators $\ad(X):\mathfrak{g} \mapsto \mathfrak{g}$, $X\in \mathfrak{k}$, are skew-symmetric.
This condition is equivalent to the following condition:
the closure of $\Ad_G(\exp(\mathfrak{k}))$ in $\Aut(\mathfrak{g})$ is compact,
see e.~g. \cite{HilNeeb}.
Note that for a compactly embedded subalgebra $\mathfrak{k}$, every operator $\ad(X):\mathfrak{g}\rightarrow \mathfrak{g}$,
$X\in \mathfrak{k}$, is semisimple and the spectrum  of  $\ad(X)$ lies in ${\bf i}\mathbb{R}$, where ${\bf i}=\sqrt{-1}$.
Recall also that a subalgebra $\mathfrak{k}$ of a Lie algebra $\mathfrak{g}$ is said to
be {\it compact} if it is compactly embedded in itself.
It is equivalent to the fact that there is a compact Lie group with a given Lie algebra $\mathfrak{k}$.
It is clear that any compactly embedded subalgebra $\mathfrak{k}$ of  $\mathfrak{g}$ is compact.
\medskip

One of the main results of this paper is the following

\begin{theorem}\label{conjec1}
For any Killing field of constant length $X\in \mathfrak{g}$  on a Riemannian manifold $(M,g)$,
the spectrum of the operator $\ad(X):\mathfrak{g}\rightarrow \mathfrak{g}$ is pure imaginary, i.~e.
is in ${\bf i}\mathbb{R}$.
\end{theorem}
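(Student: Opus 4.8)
The plan is to deduce the whole statement from one elementary rigidity property of constant-length fields, and then run an eigenvector argument on $\ad(X)$. The property I would isolate first is: if $\psi\in G$ is any isometry, then the pushed-forward field $\psi_\ast X$ is again Killing and satisfies $|\psi_\ast X|=|X|\circ\psi^{-1}$ pointwise; since $|X|$ is a constant, $\psi_\ast X$ is again of constant length, \emph{with the same constant}. Applying this to the one-parameter subgroups $\exp(sE)$, $E\in\mathfrak g$, and using $\Ad(\exp(sE))=\exp(s\,\ad(E))$ (up to the sign coming from the identification $U\mapsto\widetilde U$, which will not matter), I obtain from $X$ a one-parameter family of constant-length Killing fields. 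The whole point is that for the vectors $E$ occurring below this family is \emph{affine} in $s$, because $\ad(E)^2X=0$; the length identity $|\exp(s\,\ad(E))X|\equiv|X|$ then becomes a polynomial identity in $s$ whose top coefficient forces a squared length to vanish. I want to stress in advance that this replaces the usual compactness/averaging argument and therefore works on an \emph{arbitrary} (not necessarily complete) Riemannian manifold, since each orbit $s\mapsto\exp(sE)(q)$ is defined for all $s$ simply because $\exp(sE)\in G$.

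Concretely, I would first handle a nonzero real eigenvalue. Suppose $a\in\mathbb R$, $a\neq0$, and $[X,E]=aE$ for some real $E\in\mathfrak g$. Then $\ad(E)X=[E,X]=-aE$ and $\ad(E)^2X=-a[E,E]=0$, so $\exp(s\,\ad(E))X=X-saE$. The rigidity lemma gives $|X-saE|^2\equiv|X|^2$, i.e. $-2sa\,g(X,E)+s^2a^2|E|^2\equiv 0$ on $M$ for every $s\in\mathbb R$; comparing the coefficient of $s^2$ yields $a^2|E|^2\equiv 0$, hence $E=0$. Thus $\ad(X)$ has no nonzero real eigenvalue.

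Next I would treat a non-real eigenvalue $\lambda=a+b{\bf i}$ with $a\neq 0$. Writing an eigenvector of $\ad(X)$ on the complexification as $E_1+{\bf i}E_2$, $E_1,E_2\in\mathfrak g$, the relations are $[X,E_1]=aE_1-bE_2$ and $[X,E_2]=bE_1+aE_2$. A direct Jacobi-identity computation then gives $[X,[E_1,E_2]]=2a[E_1,E_2]$, so either $[E_1,E_2]$ is a \emph{nonzero real} eigenvector with the nonzero real eigenvalue $2a$, which is excluded by the previous step, or $[E_1,E_2]=0$. In the latter case $\ad(E_1)^2X=b[E_1,E_2]=0$ and $\ad(E_2)^2X=b[E_1,E_2]=0$, so the same termination occurs; applying the rigidity lemma to $\exp(sE_1)$ and to $\exp(sE_2)$ forces the linear coefficients $-aE_1+bE_2$ and $-bE_1-aE_2$ to vanish identically, and since the coefficient determinant is $a^2+b^2\neq0$ this gives $E_1=E_2=0$, a contradiction. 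Hence $\ad(X)$ has no eigenvalue with nonzero real part, which is exactly the assertion that its spectrum lies in ${\bf i}\mathbb R$.

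The step I expect to require the most care is the justification of the rigidity lemma itself and the accompanying bookkeeping of sign conventions, since the identification $U\mapsto\widetilde U$ reverses brackets and so $\Ad(\exp(sE))$ on Killing fields may appear as $\exp(\pm s\,\ad(E))$; fortunately $\ad(E)^2X=0$ is convention-independent and the resulting length identities are even in the relevant coefficient, so the conclusion is insensitive to these signs. I would also take care to verify that no global hypothesis sneaks in: one should \emph{not} try to produce an $\Ad(G)$-invariant inner product making $\ad(X)$ skew-symmetric, because that would require $\overline{\Ad_G(\exp\mathbb R X)}$ to be compact and would fail exactly for the hyperbolic directions (as boosts on $H^n$ illustrate). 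The genuinely new input is the nilpotency relation $\ad(E)^2X=0$, which linearizes the conjugation and is what makes the argument purely algebraic and valid on any Riemannian manifold.
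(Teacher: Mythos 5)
Your proof is correct, and while your first step coincides in substance with the paper's, your treatment of complex eigenvalues is genuinely different. The rigidity lemma you isolate (conjugation by $\exp(sE)$ preserves the constant length) is exactly Lemma 5 of \cite{Nik2016}, and the $s$- and $s^2$-coefficients of your identity $|\exp(s\,\ad(E))X|^2\equiv|X|^2$ are precisely the pointwise equalities (\ref{e0}) and (\ref{e00}) of Lemma \ref{le1}; consequently your exclusion of nonzero real eigenvalues is the same computation as part 1) of Proposition \ref{the.eigen0}, where $[Y,[Y,X]]=0$ yields $\lambda^2 g(Y,Y)=0$. The divergence is in the non-real case: the paper (part 2) of Theorem \ref{the.eigen1}) works with an arbitrary two-dimensional $L$-invariant plane for the pair $\alpha\pm\beta\,{\bf i}$ and uses the identity $g([X,V],W)+g(V,[X,W])=0$ on $[X,\mathfrak{g}]$ (Proposition \ref{decomp2}) to build a homogeneous $3\times 3$ linear system in $\bigl(g(U,U),g(U,V),g(V,V)\bigr)$ whose determinant $-2\alpha(\alpha^2+\beta^2)^4$ must vanish, forcing $\alpha=0$; you instead apply the Jacobi identity to the real and imaginary parts of a genuine eigenvector to get $[X,[E_1,E_2]]=2a[E_1,E_2]$, rule out $[E_1,E_2]\neq 0$ by the real-eigenvalue step, and in the case $[E_1,E_2]=0$ exploit the truncation $\ad(E_i)^2X=0$, which makes the conjugated family affine in $s$ and kills $E_1,E_2$ through the invertible $2\times 2$ system with determinant $a^2+b^2\neq 0$. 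Your route is more elementary and self-contained for the spectral statement alone, and it correctly needs only eigenvectors of the complexification (which suffices, since the spectrum consists of eigenvalues); the paper's route is engineered to produce more, namely the metric relations $g(U,V)=0$ and $g(U,U)=g(V,V)$ on invariant planes, which feed the finer structure theory of Propositions \ref{the.eigen2}--\ref{the.eigen4}. Two small remarks: your sign-convention caution is harmless, as you note, since the ambiguity only replaces $s$ by $-s$; and your final inference from $E\equiv 0$ as a vector field to $E=0$ in $\mathfrak{g}$ uses the injectivity of $U\mapsto\widetilde{U}$, i.e.\ the effectiveness of the action, which the paper assumes throughout, so it is legitimate but worth stating explicitly.
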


This result is a partial case of Theorem \ref{the.eigen1}.
It is clear that it is non-trivial only for noncompact Lie algebras $\mathfrak{g}$ and only when $X$ is not a central element of $\mathfrak{g}$.
On the other hand, there are examples of Killing fields of constant length $X\in \mathfrak{g}$ for noncompact $\mathfrak{g}$.
Moreover, Theorems \ref{ideal2} and \ref{on_nilrad1} give us examples when $X\in \mathfrak{n(g)}$ and $\ad(X)$ is non-trivial and nilpotent.
In particular, $\ad(X)$ is not semisimple in this case. This observation leads to the following conjecture.

\begin{conjecture}\label{conjec2}
If $\mathfrak{g}$ is semisimple, then any Killing field of constant length $X\in \mathfrak{g}$  on  $(M,g)$ is a compact vector in $\mathfrak{g}$, i.~e.
the Lie
algebra $\mathbb{R}\cdot X$ is compactly embedded in $\mathfrak{g}$.
\end{conjecture}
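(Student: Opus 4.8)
The plan is to reduce the statement to a single structural fact about $\ad(X)$ and then to remove the obstruction geometrically. Recall from the excerpt that $\mathbb{R}\cdot X$ is compactly embedded in $\mathfrak{g}$ exactly when the closure of $\{\exp(t\,\ad(X)):t\in\mathbb{R}\}$ in $\Aut(\mathfrak{g})$ is compact, and this occurs if and only if $\ad(X)$ is semisimple (diagonalizable over $\mathbb{C}$) with spectrum in ${\bf i}\mathbb{R}$: a nonzero nilpotent Jordan part forces polynomial growth of $\exp(t\,\ad(X))$, while an eigenvalue with nonzero real part forces exponential growth. Since Theorem~\ref{conjec1} already supplies the pure imaginary spectrum, the whole content of the statement is that $\ad(X)$ has no nilpotent part. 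As $\mathfrak{g}$ is semisimple, I would write the (real) Jordan decomposition $X=X_s+N$ with $X_s,N\in\mathfrak{g}$, $\ad(X_s)$ semisimple, $\ad(N)$ nilpotent, and $[X_s,N]=0$; by injectivity of $U\mapsto\widetilde{U}$ it then suffices to prove $N=0$.

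First I would convert the constant length hypothesis into a boundedness statement on an $\Ad$-orbit. Fix $p\in M$ and set $Q(U,V)=g_p\big(\widetilde{U}(p),\widetilde{V}(p)\big)$, a positive semidefinite symmetric form on $\mathfrak{g}$ with $Q(U,U)=0$ if and only if $\widetilde{U}(p)=0$. Using the equivariance $h_*\widetilde{U}=\widetilde{\Ad(h)U}$ for $h\in G$ together with the fact that $h$ is an isometry, one obtains $g_{hp}\big(\widetilde{X}(hp),\widetilde{X}(hp)\big)=Q\big(\Ad(h^{-1})X,\Ad(h^{-1})X\big)$. Writing $|\widetilde{X}|\equiv c$ for the constant length and letting $h$ range over $G$, this yields the identity
\begin{equation}\label{eq:plan-orbit}
Q\big(\Ad(g)X,\Ad(g)X\big)=c^2\qquad\text{for all }g\in G.
\end{equation}

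The crucial step is to produce a one-parameter subgroup of $G$ whose adjoint action dilates $N$ while fixing $X_s$, so that \eqref{eq:plan-orbit} becomes a nonconstant polynomial in a real exponential. Here I would use that the centralizer $\mathfrak{z}(X_s)$ of the semisimple element $X_s$ is reductive, that $N$ is a nilpotent element of $\mathfrak{z}(X_s)$ and hence lies in its semisimple part $[\mathfrak{z}(X_s),\mathfrak{z}(X_s)]$, and that the Jacobson--Morozov theorem provides a real $\mathfrak{sl}_2$-triple $(N,H_0,F)$ inside $[\mathfrak{z}(X_s),\mathfrak{z}(X_s)]$ with $[H_0,N]=2N$. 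The point of this placement is that $H_0\in\mathfrak{z}(X_s)$, so $[H_0,X_s]=0$, whence $\Ad(\exp(sH_0))X=X_s+e^{2s}N$. Substituting $g=\exp(sH_0)$ into \eqref{eq:plan-orbit} gives
\begin{equation}\label{eq:plan-poly}
Q(X_s,X_s)+2e^{2s}\,Q(X_s,N)+e^{4s}\,Q(N,N)=c^2\qquad\text{for all }s\in\mathbb{R}.
\end{equation}
Since $1,e^{2s},e^{4s}$ are linearly independent functions of $s$, the coefficient of $e^{4s}$ must vanish, i.e. $Q(N,N)=0$, so $\widetilde{N}(p)=0$. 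As $p\in M$ was arbitrary and the triple $(N,H_0,F)$ is independent of $p$, I conclude $\widetilde{N}\equiv0$ and thus $N=0$. Then $\ad(X)=\ad(X_s)$ is semisimple, and combining this with the pure imaginary spectrum from Theorem~\ref{conjec1} shows that $\exp(t\,\ad(X))$ lies in a compact torus of $\Aut(\mathfrak{g})$; hence its closure is compact and $\mathbb{R}\cdot X$ is compactly embedded.

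The step I expect to be the main obstacle is exactly the isolation of the nilpotent part, namely arranging a dilation that scales $N$ yet leaves $X_s$ untouched: a naively chosen Jacobson--Morozov element would in general also move $X_s$ (rescaling its hyperbolic and elliptic components), which would destroy the clean polynomial \eqref{eq:plan-poly}. Forcing $H_0$ into the reductive centralizer $\mathfrak{z}(X_s)$ is what eliminates this interference, after which the argument is purely formal. A secondary point to check carefully is the real (rather than complex) form of both the Jordan decomposition and the Jacobson--Morozov theorem, but both are valid for arbitrary real semisimple Lie algebras, so no additional hypotheses on $(M,g)$ are required.
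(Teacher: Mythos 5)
Your proposal addresses a statement the paper does not actually prove: Conjecture~\ref{conjec2} is left open there (the author proves only the spectral part, Theorem~\ref{conjec1}, and notes in Remark~\ref{rem7} that no counterexample is known), so there is no paper proof to match and your argument must stand on its own. Having checked it step by step, I believe it does. The reduction is right: by the paper's compactness criterion, $\mathbb{R}\cdot X$ is compactly embedded iff the closure of $\{\exp(t\,\ad(X))\}$ in $\Aut(\mathfrak{g})$ is compact, and for $\ad(X)$ semisimple with spectrum in ${\bf i}\mathbb{R}$ this follows from the real canonical form (the one-parameter group lies in a torus of block rotations); Theorem~\ref{conjec1} supplies the spectrum, so only $N=0$ is at stake. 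Your orbit identity $Q\bigl(\Ad(g)X,\Ad(g)X\bigr)=c^2$ is a correct consequence of the equivariance $h_*\widetilde{U}=\widetilde{\Ad(h)U}$ plus isometry; the placement of the Jacobson--Morozov triple is legitimate (the centralizer $\mathfrak{z}(X_s)$ of a semisimple element is reductive because the Killing form restricts nondegenerately, $N\in[\mathfrak{z}(X_s),\mathfrak{z}(X_s)]$ by uniqueness and compatibility of Jordan decompositions, and Jacobson--Morozov is valid over $\mathbb{R}$); and $[H_0,X_s]=0$, $[H_0,N]=2N$ do give $\Ad(\exp(sH_0))X=X_s+e^{2s}N$, after which linear independence of $1,e^{2s},e^{4s}$ forces $Q(N,N)=0$ at every $p$, hence $N=0$ by effectiveness.

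As for comparison: the paper's machinery is entirely infinitesimal --- the identities (\ref{e0}) and (\ref{e00}) of Lemma~\ref{le1} are exactly the first two derivatives at $g=e$ of your orbit identity, and from these alone the paper extracts only the spectral statement and partial structure (Theorem~\ref{the.eigen1}, Propositions~\ref{the.eigen0}--\ref{the.eigen4.5}). The one place the paper invokes Morozov--Jacobson, Proposition~\ref{the.eigen4.5}, is to exclude absolute zero divisors in a Levi factor; you deploy it more forcefully, inside the centralizer of the semisimple Jordan part, to manufacture a one-parameter group dilating $N$ while fixing $X_s$ --- that is the genuinely new ingredient relative to the paper, and it is precisely the obstruction (removing the nilpotent Jordan part) that the paper's pointwise identities cannot reach. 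Note also that your argument never uses constancy of $g(X,X)$ in full, only its boundedness on $M$: boundedness of the left side of your equation~(\ref{eq:plan-poly}) in $s$ already forces the $e^{4s}$ coefficient to vanish, so you prove the stronger statement for bounded Killing fields; this is essentially the route by which the conjecture was later settled in the literature (Xu and Nikonorov, \emph{Algebraic properties of bounded Killing vector fields}). Two points to make explicit in a final write-up: the existence of the real Jordan decomposition $X=X_s+N$ inside $\mathfrak{g}$ (every derivation of a semisimple algebra is inner, and the semisimple and nilpotent parts of a derivation are derivations), and the compatibility fact that an element nilpotent, respectively semisimple, in the semisimple subalgebra $[\mathfrak{z}(X_s),\mathfrak{z}(X_s)]$ has the same property in $\mathfrak{g}$, which is what pins $N$ inside that subalgebra and keeps $H_0$ commuting with $X_s$.
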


The paper is organized as follows. In Section 2,
we establish some important spectral properties of the operator
$\ad(X):\mathfrak{g} \mapsto \mathfrak{g}$ for any Killing vector field of constant length $X\in  \mathfrak{g}$ on a given Riemannian manifold $(M,g)$.
One of the main results is Theorem \ref{decomp1}, that implies non-trivial geometric properties of $(M,g)$ in the case when
the Lie algebra $\mathfrak{g}$ could be decomposed as a direct Lie algebra sum.
In Section 3, we obtain some results on  Killing vector field of constant length on geodesic orbit Riemannian spaces.
In particular, Theorems \ref{ideal2} and \ref{on_nilrad1} imply that any Killing vector field in the center of $\mathfrak{n}(\mathfrak{g})$
has constant length on a given geodesic orbit space $(G/H, g)$. This observation provides
non-trivial examples of Killing vector field of constant length $X$ such that the operator $\ad(X):\mathfrak{g} \mapsto \mathfrak{g}$ is nilpotent.
\medskip

\section{KVFCL on general Riemannian manifolds}

In  what follows, we assume that a Lie group $G$ acts effectively on a Riemannian manifold $(M,g)$ by isometries,
$\mathfrak{g}$ is the Lie algebra of $G$, elements of $\mathfrak{g}$ are identified with Killing vector field on $(M,g)$ according to
(\ref{kfaction}).

The following characterizations of Killing vector fields of constant length on Riemannian manifolds is very useful.

\begin{lemma}[Lemma 3 in \cite{BerNikClif}]\label{lemma1}
Let $X$ be a non-trivial Killing vector field on a Riemannian manifold
$(M,g)$. Then the following conditions are equivalent:

{\rm 1)} $X$ has constant length on $M$;

{\rm 2)} $\nabla_XX=0$ on $M$;

{\rm 3)} every integral curve of the field $X$ is a geodesic in $(M,g)$.
\end{lemma}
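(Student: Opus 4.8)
The plan is to split the three-fold equivalence into the trivial pair $2)\Leftrightarrow 3)$, which follows directly from the definitions of integral curve and geodesic, and the substantive pair $1)\Leftrightarrow 2)$, which I would obtain from a single computation resting on the Killing equation. Recall that $X$ being a Killing field means precisely that the Levi-Civita connection endomorphism $Y\mapsto \nabla_Y X$ is skew-symmetric with respect to $g$, i.e. $g(\nabla_Y X, Z)+g(\nabla_Z X, Y)=0$ for all vector fields $Y,Z$ on $M$. This identity is the only nontrivial input.

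For $2)\Leftrightarrow 3)$ I would argue as follows. An integral curve $\gamma$ of $X$ satisfies $\gamma'(t)=X(\gamma(t))$, so along $\gamma$ one has $\nabla_{\gamma'}\gamma'=\nabla_X X$; hence $\gamma$ is a geodesic exactly when $\nabla_X X=0$ holds along it. Since exactly one integral curve of $X$ passes through each point of $M$, the global condition $\nabla_X X\equiv 0$ on $M$ is equivalent to every integral curve being a geodesic, giving both implications at once.

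For $1)\Leftrightarrow 2)$ I would fix an arbitrary vector field $Y$ and differentiate the length-squared function $g(X,X)$, obtaining
$$Y\bigl(g(X,X)\bigr)=2\,g(\nabla_Y X, X)=-2\,g(\nabla_X X, Y),$$
where the first equality uses metric compatibility of $\nabla$ and the second uses the Killing equation with $Z=X$. Since this holds for every $Y$, the differential of $g(X,X)$ vanishes identically if and only if $\nabla_X X\equiv 0$. Thus if $X$ has constant length the left-hand side is zero for all $Y$, forcing $\nabla_X X=0$; conversely, if $\nabla_X X\equiv 0$ then $g(X,X)$ has vanishing differential and is therefore constant because $M$ is assumed connected.

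There is no serious obstacle here, since the whole argument reduces to one elementary computation. The only points that demand care are the correct application of the Killing (skew-symmetry) equation to rewrite $g(\nabla_Y X, X)$ as $-g(\nabla_X X, Y)$, and the explicit appeal to connectedness of $M$ in the implication $2)\Rightarrow 1)$, without which one could only conclude that the length is locally constant.
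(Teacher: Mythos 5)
Your proof is correct: the paper states this lemma without proof, importing it as Lemma~3 of \cite{BerNikClif}, and your argument is essentially the same standard computation as in that reference, namely the identity $Y\bigl(g(X,X)\bigr)=2\,g(\nabla_YX,X)=-2\,g(\nabla_XX,Y)$ coming from skew-symmetry of $Y\mapsto\nabla_YX$, together with the observation that integral curves of $X$ satisfy $\nabla_{\gamma'}\gamma'=\nabla_XX$. You are also right to flag connectedness in $2)\Rightarrow 1)$, which the paper guarantees by its standing assumption that all manifolds are connected.
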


\begin{lemma}[Lemma 2 in \cite{Nik2016}]\label{le1}
If a Killing vector field $X\in \mathfrak{g}$ has constant length on $(M,g)$, then for any
$Y,Z\in \mathfrak{g}$ the  equalities
\begin{eqnarray}
g([Y,X],X)=0 \,,\label{e0}\\
g([Z,[Y,X]],X)+g([Y,X],[Z,X])=0\,\,\, \label{e00}
\end{eqnarray}
hold at every point of $M$.
If $G$ acts on $(M,g)$ transitively, then condition (\ref{e0}) implies that $X$ has constant length.
Moreover, the condition (\ref{e00}) also implies that $X$ has constant length for compact $M$ and transitive $G$.
\end{lemma}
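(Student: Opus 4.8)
The plan is to treat $X,Y,Z$ as Killing vector fields on $(M,g)$ and to compute with the Levi-Civita connection $\nabla$, using the torsion-free identity $[Y,X]=\nabla_Y X-\nabla_X Y$ together with the defining property of a Killing field $W$, namely that $A\mapsto \nabla_A W$ is skew-symmetric, i.e. $g(\nabla_A W,B)+g(\nabla_B W,A)=0$. The first step is to establish the pointwise identity
\[ g([Y,X],X)=\tfrac12\,Y\big(g(X,X)\big), \]
valid for an arbitrary field $X$ and any Killing field $Y$. Indeed, writing $[Y,X]=\nabla_Y X-\nabla_X Y$ gives $g([Y,X],X)=g(\nabla_Y X,X)-g(\nabla_X Y,X)$; the first summand equals $\tfrac12 Y(g(X,X))$ by the product rule, while the second vanishes because $Y$ is Killing (the skew form $g(\nabla_\cdot Y,\cdot)$ is evaluated on the pair $(X,X)$). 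Equation~(\ref{e0}) is then immediate: constancy of $g(X,X)$ forces the right-hand side to be $0$.

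For the second step I would differentiate this identity along the flow of $Z$. Let $\Phi_s$ denote the one-parameter group of isometries generated by $Z$; since each $\Phi_s$ is an isometry, $g((\Phi_{-s})_*A,(\Phi_{-s})_*B)(p)=g(A,B)(\Phi_s(p))$ for all fields $A,B$, and $\tfrac{d}{ds}\big|_{s=0}(\Phi_{-s})_*A=[Z,A]$. Applying the step-one identity to $A=[Y,X]$, $B=X$ and differentiating at $s=0$ yields the general formula
\[ g([Z,[Y,X]],X)+g([Y,X],[Z,X])=\tfrac12\,Z\big(Y(g(X,X))\big), \]
valid for arbitrary $X$ and Killing $Y,Z$. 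Equation~(\ref{e00}) is exactly the case $g(X,X)=\mathrm{const}$. This conceptual version is worth recording because it drives both converses: it shows that the left-hand sides of (\ref{e0}) and (\ref{e00}) are, up to the factor $\tfrac12$, the first and second derivatives of the length-square function $f:=g(X,X)$ along elements of $\mathfrak{g}$.

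I would then derive the two converses from transitivity, which guarantees that the evaluation map $\mathfrak{g}\to T_pM$, $Y\mapsto Y(p)$, is surjective at every $p$. For the first, (\ref{e0}) together with the step-one identity says $Y(f)\equiv 0$ for every $Y\in\mathfrak{g}$; surjectivity then gives $df_p=0$ for all $p$, so $f$ is constant on the connected manifold $M$. For the second, (\ref{e00}) together with the step-two identity says $Z(Y(f))\equiv 0$ for all $Y,Z\in\mathfrak{g}$; fixing $Y$ and varying $Z$, surjectivity gives $d(Y(f))=0$, so each $Y(f)$ is a constant $c_Y$ on $M$.

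The main obstacle is precisely this last point: one must upgrade ``$Y(f)$ is constant'' to ``$Y(f)=0$'', and it is here that compactness of $M$ is indispensable. Since $f$ is continuous on the compact $M$, it attains a maximum at some $p_0$, where $df_{p_0}=0$ and hence $c_Y=Y(f)(p_0)=0$ for every $Y$; thus $Y(f)\equiv0$ for all $Y\in\mathfrak{g}$, and surjectivity once more yields $df\equiv 0$, so $f$ is constant. Without compactness the linear functional $Y\mapsto c_Y$ need not vanish --- $f$ could grow without bound along an isometry flow --- so (\ref{e00}) alone would not force constant length; identifying compactness as the one hypothesis that rules this out is the crux of the argument.
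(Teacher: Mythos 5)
Your proof is correct, and it is essentially the argument behind Lemma 2 of \cite{Nik2016} (which the paper cites without reproducing): the Killing identity $2g([Y,X],X)=Y\cdot g(X,X)$, differentiation along the flow of $Z$ to get the second-order identity, surjectivity of the evaluation map $\mathfrak{g}\to M_x$ from transitivity, and an extremum point of $g(X,X)$ on compact $M$ to upgrade ``$Y\cdot g(X,X)$ constant'' to ``$Y\cdot g(X,X)=0$''. Nothing is missing; in particular you correctly isolate compactness as the hypothesis needed only for the converse from (\ref{e00}).
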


Now, we are going to get some more detailed results.

\begin{prop}\label{prop.1n}
Let $X \in \mathfrak{g}$ be a Killing vector field of constant length on $(M,g)$.
Denote by $C_{\mathfrak{g}}(X)$ the centralizer of $X$ in $\mathfrak{g}$ and consider $P(X):=\{Y\in \mathfrak{g}\,|\, g(X,Y)=0 \,\mbox{ on }\, M\}$.
Then we have $[X,\mathfrak{g}]\subset P(X)$ and $[Z, P(X)]\subset P(X)$ for any $Z\in C_{\mathfrak{g}}(X)$.
\end{prop}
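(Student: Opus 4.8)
The plan is to handle the two inclusions separately. Throughout I will use that $P(X)$ is a \emph{linear subspace} of $\mathfrak{g}$, so that the sign discrepancy $[\widetilde{U},\widetilde{V}]=-\widetilde{[U,V]_{\mathfrak{g}}}$ between the bracket of vector fields and the bracket of $\mathfrak{g}$, recorded just after (\ref{kfaction}), is immaterial for deciding membership in $P(X)$: if a vector $W$ lies in $P(X)$, then so does $-W$. This lets me compute with whichever bracket is geometrically convenient and translate at the end without tracking signs.

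The inclusion $[X,\mathfrak{g}]\subset P(X)$ I would obtain as a direct consequence of Lemma \ref{le1}. Indeed, equation (\ref{e0}) asserts that $g([Y,X],X)=0$ at every point of $M$ for every $Y\in\mathfrak{g}$; by antisymmetry of the bracket this is the same as $g(X,[X,Y])=0$ on $M$, i.e. $[X,Y]\in P(X)$. Letting $Y$ range over $\mathfrak{g}$ gives $[X,\mathfrak{g}]\subset P(X)$. This is the only step where the constant-length hypothesis on $X$ is actually used.

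For the second inclusion I would fix $Z\in C_{\mathfrak{g}}(X)$ and $Y\in P(X)$ and consider the smooth function $f:=g(X,Y)$ on $M$, which vanishes identically by the definition of $P(X)$. Since $Z$ is a Killing field we have $\mathcal{L}_Z g=0$, so the Leibniz rule for the Lie derivative of $g$ yields, at every point of $M$,
\[
0=Z\big(g(X,Y)\big)=g([Z,X],Y)+g(X,[Z,Y]),
\]
where the brackets are brackets of vector fields. Now $Z\in C_{\mathfrak{g}}(X)$ means $[Z,X]_{\mathfrak{g}}=0$, hence $[Z,X]=-\widetilde{[Z,X]_{\mathfrak{g}}}=0$ as a vector field, so the first summand drops out. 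We are left with $g(X,[Z,Y])=0$ on $M$, that is $[Z,Y]\in P(X)$, and therefore $[Z,P(X)]\subset P(X)$.

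The argument is short, and the only genuine ingredient is the Killing property of $Z$, which supplies the Leibniz identity above; the two hypotheses then enter transparently, $Z\in C_{\mathfrak{g}}(X)$ annihilating the first summand and $Y\in P(X)$ forcing $f\equiv 0$ so that the left-hand side is zero. I do not expect a real obstacle here: the one point demanding care is the bookkeeping of the two bracket conventions, which I neutralize at the outset by the subspace remark. No transitivity, compactness, or homogeneity assumptions are needed for this proposition.
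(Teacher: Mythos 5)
Your proof is correct and follows essentially the same route as the paper: the first inclusion from identity (\ref{e0}) of Lemma \ref{le1}, and the second by differentiating $g(X,Y)\equiv 0$ along the Killing field $Z$ and using $[Z,X]=0$, exactly as in the author's one-line computation $g(X,[Z,Y])=Z\cdot g(X,Y)=0$. Your extra bookkeeping of the sign convention $[\widetilde{U},\widetilde{V}]=-\widetilde{[U,V]_{\mathfrak{g}}}$, neutralized by the observation that $P(X)$ is a linear subspace, is a careful touch the paper leaves implicit, but it does not change the argument.
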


\begin{proof}
By Lemma \ref{le1} we have $g([Y,X],X)=0$ for any $Y \in \mathfrak{g}$, hence, $[X,\mathfrak{g}]\subset P(X)$.
If $Y\in P(X)$, then $g(X,Y)=0$. Therefore, for any $Z\in C_{\mathfrak{g}}(X)$ we get
$g(X,[Z,Y])=Z\cdot g(X,Y)=0$ on $M$, i.~e. $[Z, P(X)]\subset P(X)$.
\end{proof}

\begin{theorem}\label{decomp1}
Let $X \in \mathfrak{g}$ be a Killing vector field of constant length on $(M,g)$.
Suppose that we have a direct Lie algebra sum $\mathfrak{g}=\mathfrak{g}_1\oplus \cdots \oplus \mathfrak{g}_l$, $l \geq 2$.
Then for every $i=1,\dots,l$, there is an ideal $\mathfrak{u}_i$ in $\mathfrak{g}_i$ such that
$[X,\mathfrak{g}_i] \subset \mathfrak{u}_i$ and $g(\mathfrak{u}_i,\mathfrak{u}_j)=0$  on $M$ for every $i\neq j$.
\end{theorem}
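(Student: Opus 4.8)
The plan is to decompose $X$ along the given direct sum and then identify the desired ideals explicitly. Write $X=X_1+\cdots+X_l$ with $X_i\in\mathfrak{g}_i$. Since the summands are ideals with $[\mathfrak{g}_i,\mathfrak{g}_k]=0$ for $k\neq i$, for any $Y\in\mathfrak{g}_i$ we have $[X,Y]=[X_i,Y]\in\mathfrak{g}_i$; hence $[X,\mathfrak{g}_i]=[X_i,\mathfrak{g}_i]=:\mathfrak{v}_i\subset\mathfrak{g}_i$. The candidate will be $\mathfrak{u}_i$, the ideal of $\mathfrak{g}_i$ generated by $\mathfrak{v}_i$; then $[X,\mathfrak{g}_i]\subset\mathfrak{u}_i$ holds by construction, and it remains only to establish the orthogonality $g(\mathfrak{u}_i,\mathfrak{u}_j)=0$ on $M$ for $i\neq j$.

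The base orthogonality comes from Lemma \ref{le1}. Fixing $i\neq j$, I would apply (\ref{e00}) with $Y\in\mathfrak{g}_i$ and $Z\in\mathfrak{g}_j$. As above $[Y,X]=[Y,X_i]\in\mathfrak{g}_i$ and $[Z,X]=[Z,X_j]\in\mathfrak{g}_j$, while $[Z,[Y,X]]\in[\mathfrak{g}_j,\mathfrak{g}_i]=0$. Hence the first summand of (\ref{e00}) vanishes and one is left with $g([Y,X],[Z,X])=0$, i.e. $g(\mathfrak{v}_i,\mathfrak{v}_j)=0$ on $M$.

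The main point, and the step I expect to be the real obstacle, is promoting $g(\mathfrak{v}_i,\mathfrak{v}_j)=0$ to $g(\mathfrak{u}_i,\mathfrak{u}_j)=0$, since $\mathfrak{v}_i$ itself need not be an ideal. The key tool is the Leibniz-type identity for Killing fields already used in the proof of Proposition \ref{prop.1n}, namely $W\cdot g(A,B)=g([W,A],B)+g(A,[W,B])$. The crucial observation is that whenever $W\in\mathfrak{g}_i$ acts on a product one of whose factors lies in $\mathfrak{g}_j$, that factor is annihilated by $W$ (as $[\mathfrak{g}_i,\mathfrak{g}_j]=0$), so one term drops out: for $W\in\mathfrak{g}_i$, $V\in\mathfrak{g}_i$, $V'\in\mathfrak{g}_j$ one gets $g([W,V],V')=W\cdot g(V,V')$. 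Applying this inductively along the generating expansion $\mathfrak{v}_i+[\mathfrak{g}_i,\mathfrak{v}_i]+[\mathfrak{g}_i,[\mathfrak{g}_i,\mathfrak{v}_i]]+\cdots$ of $\mathfrak{u}_i$, and using that $g(\,\cdot\,,\mathfrak{v}_j)$ is identically zero at each stage, yields $g(\mathfrak{u}_i,\mathfrak{v}_j)=0$. Repeating the same induction in the second argument (now with the acting field $W'\in\mathfrak{g}_j$, which kills the $\mathfrak{u}_i$-factor) upgrades this to $g(\mathfrak{u}_i,\mathfrak{u}_j)=0$, completing the proof. The only delicate points are bookkeeping the two nested inductions and staying consistent with the bracket sign convention from (\ref{kfaction}); neither is a genuine obstacle, since in every application of the Leibniz identity exactly one term vanishes identically.
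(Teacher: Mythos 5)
Your proof is correct, and it shares the paper's two computational ingredients --- the base identity $g([\mathfrak{g}_i,X],[\mathfrak{g}_j,X])=0$ on $M$ (your application of (\ref{e00}) with $Y\in\mathfrak{g}_i$, $Z\in\mathfrak{g}_j$ is literally the paper's computation $0=\mathfrak{g}_j\cdot g([\mathfrak{g}_i,X],X)=g([\mathfrak{g}_i,X],[\mathfrak{g}_j,X])$, which follows from Lemma \ref{le1}) and the Leibniz identity for Killing fields in which one term dies because $[\mathfrak{g}_i,\mathfrak{g}_j]=0$ --- but it produces the ideals by a genuinely different mechanism. The paper works top-down: it chooses a \emph{maximal} family of subspaces $\mathfrak{u}_i\subset\mathfrak{g}_i$ with $[X,\mathfrak{g}_i]\subset\mathfrak{u}_i$ and $g(\mathfrak{u}_i,\mathfrak{u}_j)=0$ for $i\neq j$ (such a family exists since $\mathfrak{g}$ is finite-dimensional), and then a single application of the Leibniz trick, $0=\mathfrak{g}_i\cdot g(\mathfrak{u}_i,\mathfrak{u}_j)=g([\mathfrak{g}_i,\mathfrak{u}_i],\mathfrak{u}_j)$, shows that enlarging $\mathfrak{u}_i$ to $\mathfrak{u}_i+[\mathfrak{g}_i,\mathfrak{u}_i]$ keeps the family admissible, so maximality forces $[\mathfrak{g}_i,\mathfrak{u}_i]\subset\mathfrak{u}_i$. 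You work bottom-up, taking $\mathfrak{u}_i$ to be the ideal \emph{generated} by $\mathfrak{v}_i=[X,\mathfrak{g}_i]$ and propagating orthogonality up the filtration $\mathfrak{v}_i+[\mathfrak{g}_i,\mathfrak{v}_i]+[\mathfrak{g}_i,[\mathfrak{g}_i,\mathfrak{v}_i]]+\cdots$ by your double induction; in effect your nested inductions unwind the paper's one-line maximality step, and each inductive step is sound because the previous stage's orthogonality holds identically on $M$, so its derivative along any Killing field vanishes. Your route costs more bookkeeping but buys a canonical, in fact minimal, choice of $\mathfrak{u}_i$, removing the non-uniqueness the paper itself flags (``such a set of subspaces should not be unique in general''); the paper's route is shorter and yields maximal rather than minimal orthogonal ideals. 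Your worry about the sign convention from (\ref{kfaction}) is harmless: every identity you invoke is a vanishing statement, unaffected by an overall sign of the bracket.
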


\begin{proof} Since $X$ is of constant length, then $\mathfrak{g}_i \cdot g(X,X)=g([\mathfrak{g}_i,X],X)=0$ for any $i$ by Lemma~\ref{le1}.
If we take $j \neq i$, then
$$
0=\mathfrak{g}_j \cdot g([\mathfrak{g}_i,X],X)=g([\mathfrak{g}_i,X],[\mathfrak{g}_j,X]).
$$
Let $\{\mathfrak{u}_i\}$, $i=1,\dots,l$, be a set of maximal (by inclusion) subspaces $\mathfrak{u}_i\subset \mathfrak{g}_i$,
such that $[\mathfrak{g}_i,X]\subset \mathfrak{u}_i$ and
$g(\mathfrak{u}_i,\mathfrak{u}_j)=0$ for every $i \neq j$ (such a set of subspaces should not be unique in general). Since
$$
0=\mathfrak{g}_i \cdot g(\mathfrak{u}_i,\mathfrak{u}_j)=0=g([\mathfrak{g}_i,\mathfrak{u}_i],\mathfrak{u}_j),
$$
then $[\mathfrak{g}_i,\mathfrak{u}_i] \subset \mathfrak{u}_i$ due to the choice of $\mathfrak{u}_i$. Hence, every $\mathfrak{u}_i$ is an ideal in
$\mathfrak{g}_i$ and in $\mathfrak{g}$.
\end{proof}

\begin{remark}
If $X=X_1+X_2+\cdots +X_l$, where $X_i \in \mathfrak{g}_i$, then $\mathfrak{u}_i\neq 0$ if $X_i$ is not in the center of $\mathfrak{g}_i$.
In particular, if $X_i\neq 0$ and $\mathfrak{g}_i$ is simple, then $\mathfrak{u}_i=\mathfrak{g}_i$.
Note, that Theorem~\ref{decomp1} leads to a more simple proof of Theorem 1 in \cite{Nik2015} about
properties of Killing vector fields of constant length on compact
homogeneous Riemannian manifolds. See also Remark \ref{dirdecgo} about geodesic orbit spaces.
\end{remark}

\begin{prop}\label{decomp2}
Let $X \in \mathfrak{g}$ be a Killing vector field of constant length on $(M,g)$.
Then for every $V,W \in [X,\mathfrak{g}]$ we have the equality
\begin{equation}\label{eq.nilport1}
g([X,V],W)+g(V,[X,W])=0
\end{equation}
on $M$.
\end{prop}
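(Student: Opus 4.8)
The plan is to read (\ref{eq.nilport1}) as an infinitesimal invariance statement and then extract it by differentiating the pointwise identity (\ref{e00}) of Lemma~\ref{le1} in the direction of $X$. Since $X$ is itself a Killing field, $\mathcal{L}_Xg=0$, so for any vector fields $V,W$ one has $X\cdot g(V,W)=g(\mathcal{L}_XV,W)+g(V,\mathcal{L}_XW)$ with $\mathcal{L}_XV=[X,V]$ the vector-field bracket. Up to the sign coming from the identification $[\widetilde U,\widetilde V]=-\widetilde{[U,V]_{\mathfrak{g}}}$, the right-hand side is precisely the left-hand side of (\ref{eq.nilport1}); hence proving the proposition is equivalent to showing that $X\cdot g(V,W)=0$, i.e.\ that $g(V,W)$ is constant along the flow of $X$, for all $V,W\in[X,\mathfrak{g}]$. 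The sign discrepancy between the two brackets is immaterial, because replacing $\ad(X)$ by $-\ad(X)$ merely negates the expression in (\ref{eq.nilport1}) and so does not affect its vanishing.

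Since every element of $[X,\mathfrak{g}]$ has the form $[Y,X]$, I would put $V=[Y,X]$ and $W=[Z,X]$ for arbitrary $Y,Z\in\mathfrak{g}$ and start from (\ref{e00}),
\[
g([Z,[Y,X]],X)+g([Y,X],[Z,X])=0,
\]
which holds at every point of $M$. Applying the derivation $X\cdot(\,\cdot\,)$ to this identity is legitimate precisely because it is a pointwise identity. The decisive observation is that the first summand contributes nothing after differentiation: for \emph{any} $A\in\mathfrak{g}$ we have $X\cdot g(A,X)=g(\mathcal{L}_XA,X)+g(A,\mathcal{L}_XX)$, where $\mathcal{L}_XX=[X,X]=0$ and $\mathcal{L}_XA=[X,A]\in[X,\mathfrak{g}]$, the latter being $g$-orthogonal to $X$ by Proposition~\ref{prop.1n} (equivalently by (\ref{e0})). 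Thus $X\cdot g([Z,[Y,X]],X)=0$.

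Differentiating (\ref{e00}) along $X$ then forces $X\cdot g([Y,X],[Z,X])=0$, and expanding this derivative by the Killing property of $X$ gives $g(\mathcal{L}_X[Y,X],[Z,X])+g([Y,X],\mathcal{L}_X[Z,X])=0$, which is exactly (\ref{eq.nilport1}) for $V=[Y,X]$ and $W=[Z,X]$. Letting $Y$ and $Z$ vary independently lets $V$ and $W$ range over all of $[X,\mathfrak{g}]$, finishing the proof. The step I expect to need the most care is the bracket-sign bookkeeping together with the justification that $\mathcal{L}_X$ maps $\mathfrak{g}$ into $[X,\mathfrak{g}]$; the one genuinely substantive point is the vanishing of the first summand, and this is exactly where the constant-length hypothesis enters, through the orthogonality $[X,\mathfrak{g}]\perp X$ of Proposition~\ref{prop.1n}.
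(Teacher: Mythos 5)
Your proof is correct, and both delicate points are handled properly: the identities (\ref{e0}) and (\ref{e00}) hold at \emph{every} point of $M$ by Lemma \ref{le1}, so differentiating them along $X$ is legitimate, and the global sign coming from the identification $[\widetilde U,\widetilde V]=-\widetilde{[U,V]_{\mathfrak{g}}}$ only negates the whole expression in (\ref{eq.nilport1}), hence is irrelevant to its vanishing. Your route, however, differs from the paper's. The paper polarizes first: since the left-hand side of (\ref{eq.nilport1}) is symmetric and bilinear in $(V,W)$, it suffices to show $g([X,V],V)=0$ for $V=[X,U]$, and this falls out in one line by differentiating the identity $g(X,[X,U])\equiv 0$ (which is just (\ref{e0})) in the direction of the Killing field $V=[X,U]$ itself; so the paper needs only (\ref{e0}) and a single differentiation, with the bilinear bookkeeping done algebraically. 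You instead differentiate (\ref{e00}) in the direction $X$, kill the term $X\cdot g([Z,[Y,X]],X)$ using $[X,X]=0$ together with the orthogonality $[X,\mathfrak{g}]\perp X$ of Proposition \ref{prop.1n}, and read off the surviving term as (\ref{eq.nilport1}) with $V=[Y,X]$, $W=[Z,X]$ ranging over all of $[X,\mathfrak{g}]$. What your version buys is the reformulation you state at the outset: (\ref{eq.nilport1}) is precisely the assertion that $g(V,W)$ is constant along the flow of $X$ for all $V,W\in[X,\mathfrak{g}]$ --- a geometric restatement (the flow lines of $X$ being geodesics, by Lemma \ref{lemma1}) that the polarization argument hides --- and you obtain the full bilinear identity at once, without polarizing. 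What the paper's version buys is economy: it invokes only (\ref{e0}) (your argument also needs (\ref{e00}), which in \cite{Nik2016} is itself obtained by differentiating (\ref{e0})), and the computation is a single displayed line. In effect the two proofs are transposes of one another: the paper differentiates a function built from $X$ along the direction $V$, while you differentiate a function built from $V,W$ along the direction $X$; both rest on the same two pillars, the Killing derivation property $X\cdot g(\cdot,\cdot)=g([X,\cdot],\cdot)+g(\cdot,[X,\cdot])$ and the orthogonality $[X,\mathfrak{g}]\perp X$.
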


\begin{proof} Taking in mind the polarization, it suffices to prove $g([X,V],V)=0$ on $M$ for every $V \in [X,\mathfrak{g}]$.
Take any $U$ such that $[X,U]=V$.
Since $X$ has constant length, we have $g(X,[U,X])=0$ according to (\ref{e0}). Hence,
$$
g([V,X],V)=g\bigl([[X,U],X], [X,U]\bigr)=[X,U]\cdot g(X,[X,U])=0,
$$
that proves the proposition.
\end{proof}

\smallskip

{\it In what follows, for  a Killing vector field of constant length $X \in \mathfrak{g}$ on $(M,g)$,
we denote by $L=L(X)$ the linear operator  $\ad(X):\mathfrak{g} \rightarrow \mathfrak{g}$.}

\smallskip

\begin{prop}\label{the.eigen0}
Let $X \in \mathfrak{g}$ be a Killing vector field of constant length on $(M,g)$. Then

1) $L=\ad(X)$ has no non-zero real eigenvalue;

2) If $[[X,Z],Z]\in [X,\mathfrak{g}]$ {\rm(}in particular, if $[[X,Z],Z]= 0${\rm)}
for some $Z\in \mathfrak{g}$, then we get $[X,Z]=0$;

3) If $\mathfrak{a}$ is an $\ad(X)$-invariant subspace in $\mathfrak{g}$ such that $[\mathfrak{a},\mathfrak{a}]\subset [X,\mathfrak{g}]$,
then $[X,\mathfrak{a}]=0$;

4) If $\mathfrak{a}$ is any abelian ideal in $\mathfrak{g}$ {\rm(}one may take $\mathfrak{a}=C(\mathfrak{n}(\mathfrak{g}))$ in particular{\rm)},
then $[X,\mathfrak{a}]=0$.
\end{prop}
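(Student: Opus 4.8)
The plan is to make part 2) the analytic core of the statement and then deduce the remaining three parts from it, using only Lemma \ref{le1}, Proposition \ref{prop.1n}, and the skew-symmetry recorded in Proposition \ref{decomp2}. All of the genuine content is front-loaded into those earlier results, so the work here is mostly bookkeeping with brackets.

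For part 2) I would set $V:=[X,Z]$ and substitute $Y=Z$ into the polarized identity (\ref{e00}). Using $[Z,X]=-V$ and the bracket identity $[Z,[Z,X]]=[[X,Z],Z]$, equation (\ref{e00}) collapses to
\[
g\bigl([[X,Z],Z],X\bigr) = -\,g(V,V)
\]
pointwise on $M$. By hypothesis $[[X,Z],Z]\in[X,\mathfrak{g}]$, and by (\ref{e0}) (equivalently, Proposition \ref{prop.1n}) every element of $[X,\mathfrak{g}]$ lies in $P(X)$, so the left-hand side vanishes identically. Hence $g(V,V)\equiv 0$ on $M$, which forces the Killing field $V=[X,Z]$ to be the zero vector field, i.e. $[X,Z]=0$ in $\mathfrak{g}$. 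The parenthetical special case $[[X,Z],Z]=0$ is automatically covered, since $0\in[X,\mathfrak{g}]$.

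Parts 3) and 4) then follow formally. For 3), given $Z\in\mathfrak{a}$, the $\ad(X)$-invariance of $\mathfrak{a}$ gives $[X,Z]\in\mathfrak{a}$, whence $[[X,Z],Z]\in[\mathfrak{a},\mathfrak{a}]\subset[X,\mathfrak{g}]$, and part 2) yields $[X,Z]=0$; as $Z\in\mathfrak{a}$ is arbitrary, $[X,\mathfrak{a}]=0$. For 4), an abelian ideal $\mathfrak{a}$ is $\ad(X)$-invariant and satisfies $[\mathfrak{a},\mathfrak{a}]=0\subset[X,\mathfrak{g}]$, so part 3) applies; the particular choice $\mathfrak{a}=C(\mathfrak{n}(\mathfrak{g}))$ is legitimate because the center of an ideal is again an ideal of $\mathfrak{g}$, and it is abelian by definition. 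Finally, for part 1) I would note a nonzero real eigenvalue $\lambda$ produces a real eigenvector $Z$ (real operator on the real space $\mathfrak{g}$), so that $V:=[X,Z]=\lambda Z\in[X,\mathfrak{g}]$ with $[X,V]=\lambda V$; setting $V=W$ in (\ref{eq.nilport1}) gives $2\lambda\,g(V,V)=0$, hence $g(V,V)\equiv 0$, so $V=0$ and $Z=0$, a contradiction. (Equivalently, 1) drops out of 2), since for such an eigenvector $[[X,Z],Z]=\lambda[Z,Z]=0\in[X,\mathfrak{g}]$.)

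I do not expect any step to pose a real obstacle. The one load-bearing observation is that the vanishing of $g([[X,Z],Z],X)$ is precisely what (\ref{e0}) delivers once the hypothesis places $[[X,Z],Z]$ in $[X,\mathfrak{g}]\subset P(X)$; the only thing to watch is the correct reduction of (\ref{e00}) under $Y=Z$, which is purely computational.
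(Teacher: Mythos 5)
Your proposal is correct and takes essentially the same approach as the paper: the core of part 2) is exactly the paper's argument (substitute $Y=Z$ into (\ref{e00}) and kill the term $g([[X,Z],Z],X)$ via (\ref{e0}), since $[[X,Z],Z]\in[X,\mathfrak{g}]$), and parts 3) and 4) are deduced identically. The only cosmetic difference is the ordering: you make 2) the core and recover 1) from it (or from (\ref{eq.nilport1})), whereas the paper proves 1) first by the same eigenvector computation in disguise.
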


\begin{proof} 1) Suppose the contrary, i.e. there is nontrivial $Y\in \mathfrak{g}$ such that $[X,Y]=\lambda Y$ for some real $\lambda \neq 0$.
Then $[Y,[Y,X]]=-\lambda[Y, Y]=0$ and we get
$$
0=g([Y,[Y,X]],X)+g([Y,X],[Y,X])=g([Y,X],[Y,X])=\lambda^2 g(Y,Y)
$$
by Lemma \ref{le1}, that is impossible.

2) If $[[X,Z],Z]= [X,U]$ for some $U\in \mathfrak{g}$, then
$g([[X,Z],Z],X)=g([X,U],X)=0$. Therefore,
$g([Z,X],[Z,X])=g([Z,[Z,X]],X)+g([Z,X],[Z,X])=0$ by Lemma \ref{le1}. Hence, $[X,Z]=0$.

3) Take any $Z\in \mathfrak{a}$. Since $[X,Z]\in \mathfrak{a}$, we get $[[X,Z],Z]\in [X,\mathfrak{g}]$. Then we have
$[X,Z]=0$ by 2).

4) This is a partial case of 3).
\end{proof}

\begin{theorem}\label{the.eigen1}
Let $X \in \mathfrak{g}$ be a Killing vector field of constant length on $(M,g)$. Then the following assertions hold.

1) We have an $L$-invariant linear space decomposition $\mathfrak{g}=A_1\oplus A_2$, where $A_1=\Ker(L^2)$ and $A_2=\Imm(L^2)$.
Moreover, $A_1$ is the root space for $L$ with the eigenvalue $0$ and $L$ is invertible on~$A_2$.

2) If $\mathfrak{o}$ is a 2-dimensional $L$-invariant subspace, corresponding to a complex conjugate pair of eigenvalues
$\alpha\pm  \beta{\bf i\,}$ ($\beta\neq 0$), i.~e. $L(U)=\alpha \cdot U-\beta \cdot V$ and $L(V)=\beta \cdot U+\alpha \cdot V$ for some non-trivial
$U,V \in \mathfrak{o}$, then
$\alpha=0$, $g(U,V)=0$, and $g(U,U)=g(V,V)$ on $G/H$.

3) All eigenvalues of $L$ have trivial real parts.
\end{theorem}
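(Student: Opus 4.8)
The plan is to treat the three assertions in turn, the common engine being the skew-symmetry relation of Proposition~\ref{decomp2}: formula (\ref{eq.nilport1}) says precisely that $L=\ad(X)$ is skew-symmetric with respect to $g$ on the invariant subspace $[X,\mathfrak{g}]=\Imm(L)$, i.e. $g(LV,W)+g(V,LW)=0$ for all $V,W\in \Imm(L)$. Since $L\bigl(\Imm(L)\bigr)\subset \Imm(L)$, this is a genuine pointwise skew-symmetry on $\Imm(L)$, and it will do almost all of the work.

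For assertion 1), I would start from the classical Fitting decomposition $\mathfrak{g}=\Ker(L^n)\oplus \Imm(L^n)$ for $n=\dim\mathfrak{g}$, in which $\Ker(L^n)$ is the root space of $L$ for the eigenvalue $0$ and $L$ is invertible on $\Imm(L^n)$; both summands are $L$-invariant. Everything then reduces to showing that the ascending chain of kernels already stabilizes at the second step, i.e. $\Ker(L^3)=\Ker(L^2)$, for then $\Ker(L^n)=\Ker(L^2)=A_1$ and $\Imm(L^n)=\Imm(L^2)=A_2$ by rank-nullity, and the Fitting splitting becomes exactly $\mathfrak{g}=A_1\oplus A_2$. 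To prove $\Ker(L^3)\subset \Ker(L^2)$, take $Z$ with $L^3Z=0$ and set $Y:=LZ$ and $W:=L^2Z=LY$. Both $Y$ and $W$ lie in $\Imm(L)=[X,\mathfrak{g}]$, and $LW=L^3Z=0$. Feeding the pair $(Y,W)$ into the skew-symmetry relation gives $g(LY,W)+g(Y,LW)=g(W,W)+0=0$, whence $g(W,W)\equiv 0$ on $M$ and therefore $W=L^2Z=0$. This is the one nonobvious move, and I expect it to be the crux of the whole theorem; once it is in place, assertion 1) is immediate.

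For assertion 2), the first point is that the $2$-dimensional block $\mathfrak{o}$ sits inside $\Imm(L)$: the matrix of $L|_{\mathfrak{o}}$ has determinant $\alpha^2+\beta^2>0$ because $\beta\neq 0$, so $L$ is invertible on $\mathfrak{o}$ and hence $\mathfrak{o}=L(\mathfrak{o})\subset \Imm(L)=[X,\mathfrak{g}]$, giving $U,V\in \Imm(L)$. Applying the skew-symmetry relation to the pairs $(U,U)$, $(V,V)$ and $(U,V)$ and substituting $LU=\alpha U-\beta V$, $LV=\beta U+\alpha V$ yields $\alpha\,g(U,U)=\beta\,g(U,V)$, $\alpha\,g(V,V)=-\beta\,g(U,V)$, and $2\alpha\,g(U,V)+\beta\bigl(g(U,U)-g(V,V)\bigr)=0$. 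Adding the first two gives $\alpha\bigl(g(U,U)+g(V,V)\bigr)=0$; since $U,V$ are nontrivial Killing fields the function $g(U,U)+g(V,V)$ is not identically zero, forcing $\alpha=0$. Substituting $\alpha=0$ back (and using $\beta\neq 0$) gives $g(U,V)=0$ from the first relation and $g(U,U)=g(V,V)$ from the third, as claimed.

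Assertion 3) then follows by assembling what has been proved. A real eigenvalue of $L$ cannot be nonzero by part 1) of Proposition~\ref{the.eigen0}, so the only real eigenvalue is $0$, which has zero real part. For a non-real eigenvalue $\alpha\pm\beta{\bf i}$ with $\beta\neq 0$, writing a complex eigenvector of $L$ as $U+V{\bf i}$ with $U,V\in \mathfrak{g}$ produces exactly a real $2$-dimensional invariant subspace $\mathfrak{o}$ of the form treated in assertion 2), so $\alpha=0$. Hence every eigenvalue of $L$ has trivial real part. The only mild care needed throughout is that the quantities $g(\cdot,\cdot)$ are functions on $M$, so all vanishing statements are read pointwise and then transferred to $\mathfrak{g}$ via the injectivity of $U\mapsto \widetilde{U}$.
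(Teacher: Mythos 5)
Your proof is correct, and its engine is the same as the paper's: the skew-symmetry relation (\ref{eq.nilport1}) of Proposition \ref{decomp2} on $[X,\mathfrak{g}]$. For assertion 1) you reproduce the paper's key step exactly — substituting $L(Z)$ and $L^2(Z)$ into (\ref{eq.nilport1}) to get $g(L^2Z,L^2Z)+g(LZ,L^3Z)=0$, hence $\Ker(L^3)=\Ker(L^2)$ — and differ only in packaging: you invoke the abstract Fitting decomposition plus stabilization of the kernel chain and rank--nullity, whereas the paper checks $\Ker(L^2)\cap\Imm(L^2)=0$ by a short direct case analysis (the paper itself notes afterwards that $\mathfrak{g}=A_1\oplus A_2$ is the Fitting decomposition, so this is no real divergence). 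In assertion 2) your route is genuinely different and cleaner. The paper applies (\ref{eq.nilport1}) to pairs built from $L(U)$ and $L(V)$ — which needs no membership check, since $L(U),L(V)\in[X,\mathfrak{g}]$ automatically — and obtains a $3\times3$ homogeneous system with entries cubic in $\alpha,\beta$, forcing the determinant $-2\alpha(\alpha^2+\beta^2)^4$ to vanish. You instead first observe that $L$ restricted to $\mathfrak{o}$ has determinant $\alpha^2+\beta^2\neq0$, so $\mathfrak{o}=L(\mathfrak{o})\subset\Imm(L)$, and then apply skew-symmetry directly to $(U,U)$, $(V,V)$, $(U,V)$; the resulting relations are linear in $\alpha,\beta$ and give $\alpha\bigl(g(U,U)+g(V,V)\bigr)=0$ at once, dispensing with the determinant computation — what the paper's choice of substitution buys is independence from the (easy) inclusion $\mathfrak{o}\subset\Imm(L)$, while yours buys a much lighter calculation. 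Two small points you handle correctly but could state explicitly: $g(U,U)+g(V,V)$ is not identically zero because $U\neq0$ in $\mathfrak{g}$ and $U\mapsto\widetilde{U}$ is injective (a nontrivial Killing field may vanish at points but not identically, and $\alpha$ is a constant, so vanishing at one point suffices); and in 3), the real and imaginary parts of a complex eigenvector are linearly independent when $\beta\neq0$, so they genuinely span a $2$-dimensional invariant subspace of the form treated in 2). Assertion 3) is then assembled exactly as in the paper, via part 1) of Proposition \ref{the.eigen0}.
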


\begin{proof} 1) Let us fix an arbitrary $Z\in \mathfrak{g}$. If we put $V=[X,Z]$ and $W=[X,[X,Z]]$ in~(\ref{eq.nilport1}), we get
\begin{eqnarray*}
g([X,[X,Z]],[X,[X,Z]])+g([X,Z],[X,[X,[X,Z]]])\\
=g(L^2(Z),L^2(Z))+g(L(Z),L^3(Z))=0.
\end{eqnarray*}
From this we see that $L^3(Z)=0$ implies $L^2(Z)=0$. This observation implies $L^3(\mathfrak{g})=L^2(\mathfrak{g})=\Imm(L^2)$.
In particular, we get that $L$ is invertible on $A_2=\Imm(L^2)$.

Let us prove that $A_1\cap A_2=\Ker(L^2) \cap \Imm(L^2)=0$. Suppose that there is a non-trivial $W\in \Ker(L^2) \cap \Imm(L^2)$. Then
$L^2(W)=0$ and there is $V\in \mathfrak{g}$ such that $W=L^2(V)$. If $L(W)=0$, then we have $L^2(V)\neq 0$ and $L^3(V)=0$, that is impossible.
If $L(W)\neq 0$, then we have $L^2(L(V))=L(W)\neq 0$ and $L^3(L(V))=L^2(W)=0$, that is again impossible by the above discussion.
Therefore, $\Ker(L^2) \cap \Imm(L^2)=A_1\cap A_2=0$ and $\mathfrak{g}=A_1\oplus A_2$.

Since $L$ is invertible on $A_2=\Imm(L^2)$, then $A_1=\Ker(L^2)$ exhausts
the root space for $L$ with the eigenvalue $0$.

2) Clear that $\mathfrak{o} \subset A_2$.
Since $L(U)=\alpha \cdot U-\beta \cdot V$ and $L(V)=\beta \cdot U+\alpha \cdot V$, then
$$
L^2(U)=(\alpha^2-\beta^2)\cdot U-2\alpha\beta \cdot V, \quad L^2(V)=2\alpha\beta \cdot U + (\alpha^2-\beta^2)\cdot V.
$$
By (\ref{eq.nilport1}), we get
$$
g(L^2(U),L(U))=0, \quad g(L^2(V),L(V))=0, \quad g(L^2(U),L(V))+g(L^2(V),L(U))=0
$$  on $M$. These three equalities could be re-written as follows:

$$
\left(%
\begin{array}{rrr}
 \alpha(\alpha^2-\beta^2) & -\beta(3\alpha^2-\beta^2) & 2\alpha \beta^2\\
 2\alpha \beta^2 & \beta(3\alpha^2-\beta^2) & \alpha(\alpha^2-\beta^2)\\
 \beta(3\alpha^2-\beta^2) & 2\alpha(\alpha^2-3\beta^2) &  -\beta(3\alpha^2-\beta^2) \\
\end{array}%
\right)
\left(%
\begin{array}{c}
g(U,U)\\
g(U,V)\\
g(V,V)\\
\end{array}%
\right)=\left(%
\begin{array}{c}
0\\
0\\
0\\
\end{array}\right).
$$

The equality $g(U,U)=0$ is impossible, since $U$ is non-trivial.
Hence, we have nontrivial solution $(g(U,U),g(U,V),g(V,V))$ of the above homogeneous linear system with the
determinant $-2\alpha(\alpha^2+\beta^2)^4$. Since
$\alpha^2+\beta^2\neq 0$, we get $\alpha=0$.
Moreover, $\alpha=0$ and $\beta \neq 0$ imply obviously
$g(U,V)=0$, and $g(U,U)=g(V,V)$ on $M$.

3) Recall that $L$ has no non-trivial real eigenvalue by 1) in Proposition \ref{the.eigen0}.
This observation and 2) imply that all eigenvalues of $L$ have trivial real parts.
\end{proof}

\smallskip

{\it In what follows,
we use the notation $A_1=\Ker(L^2)$ and $A_2=\Imm(L^2)$ as in Theorem \ref{the.eigen1}.}

\smallskip

\begin{remark}
Note that $\mathfrak{g}=A_1\oplus A_2=\Ker(L^2)\oplus \Imm(L^2)$ is the Fitting decomposition (see e.~g. Lemma 5.3.11 in \cite{HilNeeb})
for the operator $L$.
It should be noted that the decomposition $\mathfrak{g}=\Ker(L)\oplus \Imm(L)$ is not valid
at least for $X\in C(\mathfrak{n}(\mathfrak{g}))\setminus C(\mathfrak{g})$
(see Theorem \ref{on_nilrad1}) since
$\Imm(L)\subset C(\mathfrak{n}(\mathfrak{g})) \subset \Ker(L)$ in this case.
\end{remark}

\begin{remark}
By 2) of Proposition \ref{the.eigen0} we have $[L(Y),Y]=[[X,Y],Y]\not \in L(\mathfrak{g})$ for any $Y\in A_1 \setminus \Ker(L)$.
On the other hand, $[[X,Y],Y]\in \Ker(L)$. Indeed, $L^2(A_1)=0$ and
$$
L([[X,Y],Y])=[X,[[X,Y],Y]]=[[X,[X,Y]],Y]+[[X,Y],[X,Y]]=0.
$$
\end{remark}

\begin{remark} It is interesting to study KVFCL $X$ with $\Ker(L) \neq A_1$. For such $X$ the operator $L=\ad(X)$ is not semisimple.
One class of suitable examples are $X\in C(\mathfrak{n}(\mathfrak{g}))$ for geodesic orbit spaces $(G/H, g)$ as in Theorem
\ref{on_nilrad1} (if there is a vector $V\in \mathfrak{g}\setminus \mathfrak{n}(\mathfrak{g})$ such that $[X,V]\neq 0$).
It is not clear whether there is such KVFCL $X$ on a homogeneous Riemannian space $(G/H, g)$ with semisimple~$G$.
\end{remark}

 We will use the Jordan decomposition $L=L_s+L_n$, where $L_s$ and $L_n$ are semisimple and nilpotent part of $L=\ad(X)$ respectively.
Note that $L_s$ and $L_n$ are derivations on $\mathfrak{g}$ that are vanished on $\Ker(L)$, see e.~g. Propositions 5.3.7 and 5.3.9 in \cite{HilNeeb}.
\smallskip

Let us consider all complex conjugate pairs of eigenvalues $\pm   \beta_j\, {\bf i\,}$, $0<\beta_1<\beta_2<\cdots <\beta_l$, for $L$ (${\bf i\,}=\sqrt{-1}$).
Note that a semisimple part $L_s$ of the operator
$L$ has the same eigenvalues.
Let us consider
\begin{equation}\label{eq.rootsp}
V_j=\left\{Y\in A_2\,|\, \Bigl(L^2+\beta^2_j \Id\Bigr)^m (Y)=0 \mbox{ for some } m \in \mathbb{N}\right\},
\end{equation}
the root space of $L$ for the
pair $\pm \beta_j\,{\bf i\,}  $ (equivalently, the root space of $L^2$ for the
eigenvalue $-\beta^2_j$), $1\leq j \leq l$. It is easy to see that
$V_j=\{Y\in A_2\,|\, L_s^2 (Y)=-\beta^2_j Y\}$.

We can get a more detailed information (compare  with \cite[Section 4]{Nik2015}).
Let us consider a linear operator
\begin{equation}\label{eq.sigma1}
\sigma:A_2\rightarrow A_2,\quad \sigma(U)=\frac{1}{\beta_j}\, L_s (U), \quad U\in V_j.
\end{equation}
It is clear that $\sigma^2 =-\Id$, hence $\sigma$ plays the role of a complex structure. In what follows, we put $V_0:=\Ker(L^2)=\Ker(L_s)=A_1$.
For every $Y,Z\in A_2$ we consider

\begin{equation}\label{eq.sigma2}
[Y,Z]^+=\frac{1}{2}\bigl([Y,Z]+[\sigma(Y),\sigma(Z)] \bigr), \quad [Y,Z]^-=\frac{1}{2}\bigl([Y,Z]-[\sigma(Y),\sigma(Z)] \bigr).
\end{equation}

It is clear that $[Y,Z]=[Y,Z]^+ +[Y,Z]^-$ and, moreover, the following result holds.

\begin{prop}\label{the.eigen2}
For every $Y \in V_i$ and $Z\in V_j$, $i,j \geq 1$, we have $[Y,Z]^+ \in V_k$, $[Y,Z]^- \in V_l$, where
$\beta_k=|\beta_i-\beta_j|$ and $\beta_l=\beta_i+\beta_j$
{\rm(}if $-\beta_k^2$ {\rm(}$-\beta_l^2${\rm)} is not an eigenvalue of $L^2$, then $[Y,Z]^+=0$ {\rm(}respectively, $[Y,Z]^-=0${\rm))}.
In particular, $[V_i,V_j]\subset V_p\oplus V_q$, where $V_p$ {\rm(}$V_q${\rm)} are supposed to be trivial if
$-\beta_p^2$ {\rm(}respectively, $-\beta_q^2${\rm)} is not an
eigenvalue of $L^2$.
Moreover, $[V_0,V_i]\subset V_i$ and $[V_i,V_j] \subset A_2$ for $i \neq j$. In particular, $[A_1,A_2]\subset A_2$.
\end{prop}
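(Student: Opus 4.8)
The key point I would stress is that this proposition is purely Lie-algebraic: the metric enters only through Theorem~\ref{the.eigen1}, which guarantees that the spectrum of $L_s$ consists of the purely imaginary numbers $\pm\beta_j\mathbf{i}$, so that the complex structure $\sigma$ and the root spaces $V_j$ are well defined and $\mathfrak{g}=V_0\oplus V_1\oplus\cdots\oplus V_l$. The whole proof rests on the single structural fact (recorded just above the statement) that $L_s$ is a derivation of $\mathfrak{g}$, together with the defining relations $L_s(Y)=\beta_i\,\sigma(Y)$, $L_s^2(Y)=-\beta_i^2\,Y$ for $Y\in V_i$, and $\sigma^2=-\Id$.

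First I would compute the action of $L_s^2$ on a bracket of two root vectors. For $Y\in V_i$ and $Z\in V_j$ the derivation property gives
$$
L_s^2([Y,Z])=[L_s^2Y,Z]+2[L_sY,L_sZ]+[Y,L_s^2Z]=-(\beta_i^2+\beta_j^2)[Y,Z]+2\beta_i\beta_j[\sigma(Y),\sigma(Z)],
$$
and, replacing $(Y,Z)$ by $(\sigma(Y),\sigma(Z))$ and using $\sigma^2=-\Id$,
$$
L_s^2([\sigma(Y),\sigma(Z)])=-(\beta_i^2+\beta_j^2)[\sigma(Y),\sigma(Z)]+2\beta_i\beta_j[Y,Z].
$$
Forming the half-sum and half-difference as in~(\ref{eq.sigma2}) and collecting terms, these two identities telescope into
$$
L_s^2([Y,Z]^+)=-(\beta_i-\beta_j)^2[Y,Z]^+,\qquad L_s^2([Y,Z]^-)=-(\beta_i+\beta_j)^2[Y,Z]^-.
$$

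Thus $[Y,Z]^+$ and $[Y,Z]^-$ are eigenvectors of $L_s^2$ for the eigenvalues $-\beta_k^2$ and $-\beta_l^2$ with $\beta_k=|\beta_i-\beta_j|$, $\beta_l=\beta_i+\beta_j$. Since $L_s$ is semisimple with $\Ker(L_s)=\Ker(L_s^2)=A_1=V_0$, the zero eigenspace of $L_s^2$ is exactly $A_1$, while $A_2$ is the sum of the eigenspaces for the nonzero eigenvalues $-\beta_p^2$, each such eigenspace being the corresponding $V_p$. Hence, if $-\beta_k^2$ is an eigenvalue of $L^2$ then $[Y,Z]^+\in V_k$, and otherwise $[Y,Z]^+$, being an eigenvector for a value outside the spectrum, must vanish; the same dichotomy applies to $[Y,Z]^-\in V_l$. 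This gives $[V_i,V_j]\subset V_k\oplus V_l$, and when $i\neq j$ (so that $\beta_k,\beta_l>0$) it forces $[V_i,V_j]\subset A_2$. For the remaining assertions I would run the same computation with $W\in V_0$ in place of $Y$: then $L_sW=0$ yields $L_s^2([W,Z])=[W,L_s^2Z]=-\beta_i^2[W,Z]$ for $Z\in V_i$, so $[W,Z]\in V_i$, i.e. $[V_0,V_i]\subset V_i$; summing over $i\geq1$ gives $[A_1,A_2]\subset A_2$.

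I do not expect a genuine obstacle, as the argument is a bookkeeping exercise built on the derivation property of $L_s$. The only point requiring care is the placement of $[Y,Z]^\pm$ inside $A_1$ versus $A_2$ (in particular when $i=j$, where $\beta_k=0$ and $[Y,Z]^+$ lands in $V_0=A_1$), and this is settled by the semisimplicity of $L_s$ together with the identification $\Ker(L_s)=A_1$. The constant-length hypothesis and the metric are used only indirectly, via Theorem~\ref{the.eigen1}, to ensure that the eigenvalues are purely imaginary so that $\sigma$ and the spaces $V_j$ exist in the first place.
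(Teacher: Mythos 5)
Your proof is correct and follows exactly the paper's route: the paper's proof consists of the same two eigenvalue identities $L_s^2([Y,Z]^\pm)=-(\beta_i\mp\beta_j)^2[Y,Z]^\pm$ (stated there as "straightforward calculations," which you have written out via the derivation property of $L_s$) plus the same $V_0$-computation, with placement in the $V_k$'s settled by semisimplicity of $L_s$ and $\Ker(L_s)=A_1$.
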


\begin{proof} Straightforward calculations using (\ref{eq.sigma1}) and (\ref{eq.sigma2}) imply that
$$
L_s^2([Y,Z]^+)=-(\beta_i-\beta_j)^2\cdot [Y,Z]^+, \quad L_s^2([Y,Z]^-)=-(\beta_i+\beta_j)^2 \cdot [Y,Z]^-.
$$
If $U\in V_0=A_1$ and $Y\in V_i$, then $L_s([U,Y])=[L_s(U),Y]+[U,L_s(Y)]=[U,L_s(Y)]$ and
$L_s^2([U,Y])=[U,L_s^2(Y)]=-\beta_i^2\cdot [U,Y]$, hence, $[V_0,V_i]\subset V_i$.
These arguments prove the proposition.
\end{proof}

\begin{prop}\label{the.eigen3}
In the above notations and assumptions, the following assertions hold.

1) $C(\mathfrak{n}(\mathfrak{g}))\subset \Ker(L)\subset A_1$.

2) If $J=\{Y\in \mathfrak{g} \,|\, [C(\mathfrak{n}(\mathfrak{g})), Y]=0\}$, then $J$ is an ideal in $\mathfrak{g}$,
such that $X\in J$ and $A_2 \subset L(\mathfrak{g})=[X,\mathfrak{g}]\subset J$.

3) If $I=\{Y\in A_1 \,|\, g(Y,A_2)=0 \mbox{ on }M\}$, then $I$ is an ideal in $A_1$, such that $X\in I$, $L(A_1)=[X,A_1]\subset I$, and
$[L(A_1),A_1]=[[X,A_1],A_1]\subset I$.

4) $\widetilde{J}:=A_2+[A_2,A_2]$ is an ideal in $\mathfrak{g}$, $\widetilde{J} \subset J$, and
$g(I\cap C(\mathfrak{n}(\mathfrak{g})),\widetilde{J})=0$ on $M$.

5) $L(\widetilde{J} \cap A_1)\subset \widetilde{J} \cap I$ and  $L_s(\widetilde{J} \cap A_1)=0$.
\end{prop}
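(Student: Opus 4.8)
The plan is to establish the five assertions in the given order, since each relies on the previous ones, and the whole argument rests on two tools: the Leibniz rule $Z\cdot g(U,V)=g([Z,U],V)+g(U,[Z,V])$ for Killing fields $Z,U,V\in\mathfrak g$ (as already used in Proposition~\ref{prop.1n}), and the bracket relations of Proposition~\ref{the.eigen2}. For 1) I would note that $C(\mathfrak n(\mathfrak g))$ is an abelian ideal of $\mathfrak g$ (it is characteristic in the ideal $\mathfrak n(\mathfrak g)$), so 4) of Proposition~\ref{the.eigen0} yields $[X,C(\mathfrak n(\mathfrak g))]=0$, that is $C(\mathfrak n(\mathfrak g))\subset\Ker(L)\subset\Ker(L^2)=A_1$. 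For 2) the set $J$ is the centralizer in $\mathfrak g$ of the ideal $C(\mathfrak n(\mathfrak g))$, hence an ideal; $X\in J$ is precisely 1); a one-line Jacobi identity together with 1) shows $[X,Y]$ centralizes $C(\mathfrak n(\mathfrak g))$ for every $Y$, so $L(\mathfrak g)\subset J$; and $A_2=\Imm(L^2)\subset\Imm(L)=L(\mathfrak g)$ finishes it.

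For 3) I would first record that $A_1=\Ker(L^2)$, being the Fitting null-component of the derivation $L$, is a subalgebra, so $[A_1,A_1]\subset A_1$. Then $X\in I$ because $X\in\Ker(L)\subset A_1$ and $g(X,A_2)=0$, the latter since $A_2\subset[X,\mathfrak g]$ and (\ref{e0}) gives $g(X,[X,\cdot])=0$. To see $I$ is an ideal of $A_1$, take $Y\in I$, $W\in A_1$, $V\in A_2$ and differentiate the identity $g(Y,V)\equiv 0$ along $W$: by Leibniz $0=g([W,Y],V)+g(Y,[W,V])$, and $[W,V]\in A_2$ by $[A_1,A_2]\subset A_2$, so $g(Y,[W,V])=0$ and hence $g([W,Y],V)=0$; as $[W,Y]\in A_1$ this gives $[W,Y]\in I$. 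The inclusions $L(A_1)\subset I$ and $[L(A_1),A_1]\subset I$ then follow from $X\in I$ and the ideal property of $I$ in $A_1$.

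The heart of the argument, and the step I expect to be the main obstacle, is 4). The inclusion $\widetilde J\subset J$ is immediate: $A_2\subset J$ by 2) and $J$ is a subalgebra, so $[A_2,A_2]\subset J$. The delicate claim is that $\widetilde J$ is an ideal. Here $[A_2,A_2]$ need not lie in $A_2$, since by Proposition~\ref{the.eigen2} the ``$+$'' brackets $[V_i,V_i]^+$ fall into $V_0=A_1$; so I set $B:=\pr_{A_1}[A_2,A_2]\subset A_1$ and check $\widetilde J=A_2\oplus B$. The only non-routine inclusion is $[A_1,B]\subset B$, which I would obtain by a Jacobi computation: for $W\in A_1$ and $U,V\in A_2$, $[W,[U,V]]=[[W,U],V]+[U,[W,V]]\in[A_2,A_2]$ because $[W,U],[W,V]\in[A_1,A_2]\subset A_2$; comparing $A_1$-components and using $[W,\pr_{A_1}[U,V]]\in A_1$, $[W,\pr_{A_2}[U,V]]\in A_2$ gives $[W,\pr_{A_1}[U,V]]=\pr_{A_1}[W,[U,V]]\in B$. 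The remaining brackets $[A_1,A_2]\subset A_2$, $[A_2,A_2]\subset\widetilde J$, $[A_2,B]\subset[A_1,A_2]\subset A_2$ are routine, so $\widetilde J$ is an ideal. For the orthogonality $g(I\cap C(\mathfrak n(\mathfrak g)),\widetilde J)=0$, the key observation is that $A_2\subset J=C_{\mathfrak g}(C(\mathfrak n(\mathfrak g)))$ by 2), so any $Y\in C(\mathfrak n(\mathfrak g))$ commutes with every $U\in A_2$; then for $U,V\in A_2$, differentiating $g(Y,V)\equiv 0$ (valid as $Y\in I$) along $U$ gives $0=U\cdot g(Y,V)=g(Y,[U,V])$, whence $g(Y,[A_2,A_2])=0$, and with $g(Y,A_2)=0$ we conclude $g(Y,\widetilde J)=0$.

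Finally, 5) follows formally: $\widetilde J\cap A_1\subset A_1=\Ker(L_s)$ gives $L_s(\widetilde J\cap A_1)=0$ at once, while for $W\in\widetilde J\cap A_1$ we have $L(W)=[X,W]\in\widetilde J$ since $\widetilde J$ is an ideal by 4), and $L(W)\in I$ since $W\in A_1$ and $L(A_1)\subset I$ by 3); hence $L(\widetilde J\cap A_1)\subset\widetilde J\cap I$.
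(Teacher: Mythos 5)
Your proof is correct and follows essentially the same route as the paper: 1) via Proposition \ref{the.eigen0}(4), 2) via the centralizer-of-the-ideal $C(\mathfrak{n}(\mathfrak{g}))$ argument, 3) via the Leibniz rule combined with $[A_1,A_2]\subset A_2$, and 4)--5) exactly as in the text, including the key observation that $A_2\subset J$ makes $[U,Y]=0$ and hence $g(Y,[A_2,A_2])=0$ for $Y\in I\cap C(\mathfrak{n}(\mathfrak{g}))$. The only difference is one of detail, not of method: in 4) you spell out, via $B=\pr_{A_1}[A_2,A_2]$ and the Jacobi identity, the verification that $\widetilde{J}$ is an ideal, which the paper asserts in a single line from $[A_1,A_2]\subset A_2$.
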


\begin{proof} 1)
We get $C(\mathfrak{n}(\mathfrak{g}))\subset \Ker(L) \subset A_1$ by 4) of Proposition \ref{the.eigen0}.

2) If $[C(\mathfrak{n}(\mathfrak{g})), Y]=0$, then for any $Z\in \mathfrak{g}$ we have (recall that $C(\mathfrak{n}(\mathfrak{g}))$ is an ideal in $\mathfrak{g}$)
$$
[C(\mathfrak{n}(\mathfrak{g})), [Z, Y]]\subset [[C(\mathfrak{n}(\mathfrak{g})),Z], Y]+[Z,[C(\mathfrak{n}(\mathfrak{g})), Y]]\subset
[C(\mathfrak{n}(\mathfrak{g})), Y]=0,
$$
hence, $J$ is an ideal in $\mathfrak{g}$.
We know that $X\in J$ by 1), hence, $A_2 \subset L(\mathfrak{g})=[X, \mathfrak{g}] \subset J$.

3) Since $g(I,A_2)=0$ on $G/H$ and $[A_1,A_2]\subset A_2$, we get $0=A_1\cdot g(I,A_2)=g([A_1,I],A_2)$, therefore, $[A_1,I]\subset I$.
We know that $g(X,A_2)=0$ due to $A_2\subset [X,\mathfrak{g}]$ and Proposition \ref{prop.1n}, hence $X\in I$. Therefore, $L(A_1)=[X,A_1]\subset I$ and
$[L(A_1),A_1]=[[X,A_1],A_1]\subset I$.

4) Since $[A_1,A_2]\subset A_2$, $\widetilde{J}=A_2+[A_2,A_2]$ is an ideal in $\mathfrak{g}$.
Since $[C(\mathfrak{n}(\mathfrak{g})),A_2]=0$, we get $\widetilde{J} \subset J$. Since
$g(I\cap C(\mathfrak{n}(\mathfrak{g})), A_2)=0$ and $\widetilde{J}$  is generated by $A_2$, we get
$$
g(I\cap C(\mathfrak{n}(\mathfrak{g})), [A_2,A_2])=A_2\cdot g(I\cap C(\mathfrak{n}(\mathfrak{g})), A_2)=0
$$
and $g(I\cap C(\mathfrak{n}(\mathfrak{g})), \widetilde{J})=0$.

5) $L(\widetilde{J} \cap A_1)\subset L(A_1)\subset I$ by 3) and $L(\widetilde{J} \cap A_1)\subset [X, \widetilde{J}] \subset \widetilde{J}$
since $\widetilde{J}$ is an ideal in $\mathfrak{g}$.
Note that $L_s(\widetilde{J} \cap A_1)=0$ follows from $A_1=\Ker(L_s)$.
\end{proof}
\medskip

\begin{remark}
Since $[A_1,A_2]\subset A_2$, then for any ideal $\mathfrak{i}$ of $\mathfrak{g}$ with the property $\mathfrak{i}\subset A_1$ we have $[\mathfrak{i},A_2]=0$.
\end{remark}

\begin{prop}\label{the.eigen4} Suppose that $X\in \mathfrak{g}$ has constant length on $(M,g)$. Then the following assertions hold.

1) If  $Y,Z\in \mathfrak{g}$ are such that
$[[X,Y],Z]\in [X,\mathfrak{g}]$, then $g([X,Y],[X,Z])=0$ on $M$.

2) If $V_i$ and $V_j$ are the root spaces (\ref{eq.rootsp}) with $i \neq j$, then
$g(V_i,V_j)=0$ on $M$.

3) If $i \neq j$ and $\beta_j\neq 2\beta_i$, then $g([V_i,V_j],V_i)=0$ and $g([V_i,V_i],V_j)=0$ on $M$.
\end{prop}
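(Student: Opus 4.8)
The plan is to prove the three assertions in order, each one reducing to the preceding material, and to lean throughout on the orthogonality of distinct root spaces (assertion 2)) together with the invariance identity $W\cdot g(A,B)=g([W,A],B)+g(A,[W,B])$ expressing that $g$ is preserved by the flow of a Killing field (already used in the proof of Proposition \ref{prop.1n}).

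For 1), I would start from $[[X,Y],Z]=[Z,[Y,X]]$. By Proposition \ref{prop.1n} the hypothesis $[[X,Y],Z]\in[X,\mathfrak g]\subset P(X)$ means precisely that $g([[X,Y],Z],X)=0$ on $M$ (every element of $[X,\mathfrak g]$ is $g$-orthogonal to $X$ by (\ref{e0})), i.e. $g([Z,[Y,X]],X)=0$. Feeding this into the second identity (\ref{e00}) of Lemma \ref{le1} gives at once $g([Y,X],[Z,X])=0$, that is $g([X,Y],[X,Z])=0$. This is the polarized version of 2) in Proposition \ref{the.eigen0} and needs nothing beyond Lemma \ref{le1}.

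For 2), I would reduce $g(V_i,V_j)=0$ to 1). Since $L$ is invertible on $A_2$ by Theorem \ref{the.eigen1} and each $V_i\subset A_2$ is $L$-invariant, any $Y\in V_i$, $Z\in V_j$ can be written $Y=[X,\widehat Y]$, $Z=[X,\widehat Z]$ with $\widehat Y\in V_i$, $\widehat Z\in V_j$. Then $[[X,\widehat Y],\widehat Z]=[Y,\widehat Z]\in[V_i,V_j]$, which by Proposition \ref{the.eigen2} lies in $V_p\oplus V_q$ with $\beta_p=|\beta_i-\beta_j|>0$ (as $i\neq j$) and $\beta_q=\beta_i+\beta_j$, hence inside $A_2\subset[X,\mathfrak g]$. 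So the hypothesis of 1) holds, and its conclusion is exactly $g(Y,Z)=0$.

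For 3), both identities combine 2) with the frequency bookkeeping of Proposition \ref{the.eigen2}. For $g([V_i,V_j],V_i)=0$ I note $[V_i,V_j]\subset V_p\oplus V_q$ with $\beta_p=|\beta_i-\beta_j|$ and $\beta_q=\beta_i+\beta_j$; the assumptions $i\neq j$ and $\beta_j\neq 2\beta_i$ (together with $\beta_j>0$) force $\beta_p,\beta_q\notin\{0,\beta_i\}$, so $V_p,V_q\subset A_2$ are both distinct from $V_i$, and 2) kills the pairing with $V_i$. The second identity is where I expect the only genuine subtlety: the naive route $[V_i,V_i]\subset V_0\oplus V_{2\beta_i}$ produces a $V_0=A_1$ component, and $A_1$ is \emph{not} $g$-orthogonal to $A_2$ in general, so 2) does not apply to it. I would sidestep this by integration by parts. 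For $Y,Z\in V_i$ and $U\in V_j$ with $i\neq j$, assertion 2) gives $g(Z,U)\equiv 0$ on $M$, so the invariance identity with $W=Y$ reads $0=Y\cdot g(Z,U)=g([Y,Z],U)+g(Z,[Y,U])$, whence $g([Y,Z],U)=-g(Z,[Y,U])$. The bracket has now been rerouted into $[Y,U]\in[V_i,V_j]\subset V_p\oplus V_q\subset A_2$, and since $\beta_j\neq 2\beta_i$ forces $\beta_p,\beta_q\neq\beta_i$, assertion 2) gives $g(Z,[Y,U])=0$, hence $g([V_i,V_i],V_j)=0$. The main point to get right, and the step I consider the real obstacle, is exactly this: trading $[V_i,V_i]$ (which meets $A_1$) for $[V_i,V_j]\subset A_2$ keeps every inner product inside $A_2$, where the orthogonality of distinct root spaces supplied by 2) is actually available.
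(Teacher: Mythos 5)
Your proposal is correct and takes essentially the same route as the paper: part 1) via the identities (\ref{e0}) and (\ref{e00}) of Lemma \ref{le1}, part 2) by feeding $[V_i,V_j]\subset A_2\subset[X,\mathfrak{g}]$ (Proposition \ref{the.eigen2} plus invertibility of $L$ on $A_2$) into 1), and part 3) exactly as in the paper, whose one-line computation $g([V_i,V_i],V_j)=g([V_i,V_i],V_j)+g(V_i,[V_i,V_j])=V_i\cdot g(V_i,V_j)=0$ is precisely your ``integration by parts'' that trades $[V_i,V_i]$ (which meets $A_1$) for $[V_i,V_j]\subset A_2$. The subtlety you flagged about the $V_0$-component is real, and the paper's proof avoids it in the same way you do.
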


\begin{proof} 1) If $U\in \mathfrak{g}$ is such that $[[X,Y],Z]=[X,U]$, then
$$
g([Z,[Y,X]],X)=g([[X,Y],Z],X)=g([X,U],X)=0.
$$
Then we  get
$g([Y,X],[Z,X])=g([Z,[Y,X]],X)+g([Y,X],[Z,X])=0$ by (\ref{e00}).

2) For every  $Y\in V_i$ and  $Z \in V_j$ we get $[X,Y] \in V_i$ and $[[X,Y],Z]\in A_2 \subset [X,\mathfrak{g}]$ by Proposition \ref{the.eigen2}. Therefore,
$g([X,Y],[X,Z])=0$ on $M$ by 1). Since $L=\ad(X)$ is invertible on~$A_2$, we get $g(V_i,V_j)=0$ on $M$.

3) If $\beta_j\neq 2\beta_i$, then $[V_i,V_j]\subset \oplus_{k\neq i} V_k$ by Proposition \ref{the.eigen2}. Hence, $g([V_i,V_j],V_i)=0$ by 2). Therefore,
$g([V_i,V_i],V_j)=g([V_i,V_i],V_j)+g(V_i,[V_i,V_j])=V_i \cdot g(V_i,V_j)=0$.
\end{proof}

Since always $X\in \Ker(L)=\Ker(\ad(X))\subset A_1$, we get $A_1 \neq 0$. On the other hand, it is possible that $A_1=\mathfrak{g}$ and $A_2=0$
(see Remark \ref{rem7} and Proposition \ref{goabdev}).

\begin{prop}\label{the.eigen4.5} Suppose that $X\in \mathfrak{g}$ has constant length on $(M,g)$ and $A_1=\mathfrak{g}$.
Then $X \in \mathfrak{n(g)}$.
\end{prop}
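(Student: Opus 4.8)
The plan is to read the hypothesis $A_1=\mathfrak{g}$ purely as the algebraic statement that $L^2=\ad(X)^2=0$, since by definition $A_1=\Ker(L^2)$; in particular $L=\ad(X)$ is nilpotent. From here the conclusion $X\in\mathfrak{n}(\mathfrak{g})$ will follow from two steps: first showing that $X$ lies in the radical $\mathfrak{r}(\mathfrak{g})$, and then invoking the standard characterization of the nilradical as the set of ad-nilpotent elements of the radical.

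First I would fix a Levi decomposition $\mathfrak{g}=\mathfrak{r}(\mathfrak{g})\rtimes\mathfrak{s}$ and write $X=X_r+X_s$ with $X_r\in\mathfrak{r}(\mathfrak{g})$ and $X_s\in\mathfrak{s}$. The projection $\pi\colon\mathfrak{g}\to\mathfrak{g}/\mathfrak{r}(\mathfrak{g})\cong\mathfrak{s}$ is a Lie algebra homomorphism carrying $X$ to $X_s$, so the relation $\ad(X)^2=0$ descends to $\ad_{\mathfrak{s}}(X_s)^2=0$ on the semisimple algebra $\mathfrak{s}$. The key observation is that in a semisimple Lie algebra a nonzero element $N$ with $\ad(N)$ nilpotent cannot satisfy $\ad(N)^2=0$: by Jacobson--Morozov, $N$ belongs to an $\mathfrak{sl}_2$-triple $(N,H,F)$ with $[N,F]=H$ and $[H,N]=2N$, whence $\ad(N)^2(F)=[N,[N,F]]=[N,H]=-2N\neq0$. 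Applying this to $X_s$ forces $X_s=0$, i.e. $X\in\mathfrak{r}(\mathfrak{g})$.

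To finish, I would use that $X\in\mathfrak{r}(\mathfrak{g})$ together with nilpotency of $\ad_{\mathfrak{g}}(X)$ implies $X\in\mathfrak{n}(\mathfrak{g})$. Since $\mathfrak{r}(\mathfrak{g})$ is an ideal, $\ad_{\mathfrak{g}}(X)$ maps $\mathfrak{g}$ into $\mathfrak{r}(\mathfrak{g})$ and induces the zero map on $\mathfrak{g}/\mathfrak{r}(\mathfrak{g})$ (because $X\in\mathfrak{r}(\mathfrak{g})$), so in block form $\ad_{\mathfrak{g}}(X)$ is nilpotent if and only if its restriction $\ad_{\mathfrak{r}(\mathfrak{g})}(X)$ is nilpotent. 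For the solvable algebra $\mathfrak{r}(\mathfrak{g})$ the set of ad-nilpotent elements coincides with its nilradical $\mathfrak{n}(\mathfrak{r}(\mathfrak{g}))$: by Lie's theorem the diagonal entries of the simultaneously triangularized adjoint representation are linear functionals, and ad-nilpotency of an element is exactly the vanishing of all of them, which cuts out the nilradical. Finally $\mathfrak{n}(\mathfrak{r}(\mathfrak{g}))=\mathfrak{n}(\mathfrak{g})$, since $\mathfrak{n}(\mathfrak{r}(\mathfrak{g}))$ is characteristic in the ideal $\mathfrak{r}(\mathfrak{g})$, hence a nilpotent ideal of $\mathfrak{g}$, while $\mathfrak{n}(\mathfrak{g})\subset\mathfrak{r}(\mathfrak{g})$ is a nilpotent ideal of $\mathfrak{r}(\mathfrak{g})$. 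Thus $X\in\mathfrak{n}(\mathfrak{g})$.

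The step I expect to require the most care is the last one, the passage from ``ad-nilpotent and in the radical'' to ``in the nilradical''. The individual facts are standard and can be cited from \cite{HilNeeb}, but one must be precise about the equivalence of nilpotency of $\ad_{\mathfrak{g}}(X)$ and of $\ad_{\mathfrak{r}(\mathfrak{g})}(X)$ and about the identification $\mathfrak{n}(\mathfrak{r}(\mathfrak{g}))=\mathfrak{n}(\mathfrak{g})$; in particular, a naive attempt to show directly that the ideal $\mathbb{R}\cdot X+\mathfrak{n}(\mathfrak{g})$ is nilpotent via Engel's theorem is circular, since sums of ad-nilpotent elements need not be ad-nilpotent. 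It is also worth noting that the constant-length hypothesis enters only through the definition of $A_1$: the conclusion is in fact a consequence of the purely algebraic condition $\ad(X)^2=0$.
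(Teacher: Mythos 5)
Your proof is correct and follows essentially the same route as the paper: Levi decomposition $\mathfrak{g}=\mathfrak{r(g)}\rtimes\mathfrak{s}$, reduction of $\ad(X)^2=0$ to the semisimple part, a Jacobson--Morozov $\mathfrak{sl}_2$-triple contradiction forcing $X_{\mathfrak{s}}=0$, and then the characterization of $\mathfrak{n(g)}$ as the ad-nilpotent elements of $\mathfrak{r(g)}$. The only differences are cosmetic: you pass to the quotient $\mathfrak{g}/\mathfrak{r(g)}$ instead of comparing $\mathfrak{s}$-components directly, and you re-prove the nilradical characterization that the paper simply cites (Remark 7.4.7 in \cite{HilNeeb}).
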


\begin{proof}
If $A_1=\mathfrak{g}$, then $L^2=(\ad(X))^2=0$ on $\mathfrak{g}$.
Elements $X\in \mathfrak{g}$ with this property are called {\it absolute zero divisors} in $\mathfrak{g}$.
Using the Levi decomposition $\mathfrak{g}=\mathfrak{r(g)}\rtimes \mathfrak{s}$,
one can show that $X\in \mathfrak{r(g)}$.
If $X=X_{\mathfrak{r(g)}}+X_{\mathfrak{s}}$, where $X_{\mathfrak{r(g)}}\in \mathfrak{r(g)}$ and $X_{\mathfrak{s}}\in \mathfrak{s}$,
then for any $Y\in\mathfrak{s}$ we have
$(\ad(X))^2(Y)=[X_{\mathfrak{s}},[X_{\mathfrak{s}},Y]]+Z$,
where $Z=[X,[X_{\mathfrak{r(g)}},Y]]+[X_{\mathfrak{r(g)}},[X_{\mathfrak{s}},Y]]\in \mathfrak{r(g)}$.
Hence, $(\ad(X))^2=0$ imply $[X_{\mathfrak{s}},[X_{\mathfrak{s}},Y]]=0$ for all $Y\in\mathfrak{s}$, i.~e. $X_{\mathfrak{s}}$  is
an absolute zero divisor in $\mathfrak{s}$, that impossible for $X_{\mathfrak{s}}\neq 0$.
Indeed, if $U \in \mathfrak{s}$ is a non-trivial absolute zero divisor, then $U$ is a non-trivial nilpotent element in $\mathfrak{s}$.
Hence, there are $V,W \in \mathfrak{s}$
such that $[W,U]=2U$, $[W,V]=-2V$, and $[U,V]=W$ by the Morozov--Jacobson theorem (see e.~g. Theorem 3 in \cite{Jacob}).
But this implies $[U,[U,V]]=-2U$ that contradicts to $(\ad(U))^2=0$ (see \cite{Kost} for a more detailed discussion).
Therefore, we get $X_{\mathfrak{s}}=0$ and $X\in \mathfrak{r(g)}$.

Moreover, it is known that $\mathfrak{n(g)}=\{Y \in  \mathfrak{r(g)}\,|\, (\ad(Y))^p=0 \mbox{ for some } p\in \mathbb{N}\}$,
see e.~g. Remark 7.4.7 in \cite{HilNeeb}. Therefore, $X\in  \mathfrak{n(g)}$.
\end{proof}
\bigskip

\section{KVFCL on geodesic orbit spaces}

Let $(M=G/H, g)$ be a homogeneous Riemannian space, where $H$ is a compact subgroup
of a Lie group $G$ and $g$ is a $G$-invariant Riemannian metric.
We will suppose that $G$ acts effectively on $G/H$ (otherwise it is possible to
factorize by $U$, the maximal normal subgroup of $G$ in $H$).

We recall the definition of one important  subclass of homogeneous Riemannian spaces.

A Riemannian manifold $(M,g)$ is called {\it a  manifold with
homogeneous geodesics or a geodesic orbit manifold} (shortly,  {\it GO-manifold}) if any
geodesic $\gamma $ of $M$ is an orbit of a 1-parameter subgroup of
the full isometry group of $(M,g)$. Obviously, any geodesic orbit manifold is homogeneous.
A Riemannian homogeneous space $(M=G/H,g)$
is called {\it a space with homogeneous geodesics or a geodesic orbit space}
(shortly,  {\it GO-space}) if any geodesic $\gamma $ of $M$ is an orbit of a
1-parameter subgroup of the group $G$.
Hence, a Riemannian manifold $(M,g)$ is  a geodesic orbit Riemannian manifold,
if it is a geodesic orbit space with respect to its full connected isometry group. This terminology was introduced in~\cite{KV}
by O.~Kowalski and L.~Vanhecke, who initiated a systematic study of such spaces.
In the same paper, O.~Kowalski and L.~Vanhecke classified all GO-spaces
of dimension $\leq 6$.
A detailed exposition on geodesic orbit spaces and some important subclasses could be found also in  \cite{Arv2017,Du2018,Nik2016},
see also the references therein.
In particular, one can find many interesting results  about  GO-manifolds
and its subclasses in \cite{AA, AN, AV, BerNikClif, BerNik2012, BerNik2018, CN2017, Gor96, GorNik2018, NikNik2018, Storm2018, Tam, Ta, Yakimova, Zil96}.

Recall that all symmetric, weakly symmetric, normal homogeneous, naturally reductive, generalized normal homogeneous, and Clifford--Wolf homogeneous
Riemannian spaces are geodesic orbit, see \cite{CN2017}.
Besides the above examples, every isotropy irreducible Riemannian space is naturally reductive, and hence geodesic orbit, see e.~g.~\cite{Bes}.

\medskip

The following simple result is very useful ($M_x$ denotes the tangent space to $M$ at the point $x\in M$).

\begin{lemma}[\cite{Nik2013n}, Lemma 5]\label{newcrit}
Let $(M,g)$ be a Riemannian manifold and $\mathfrak{g}$
be its Lie algebra of Killing vector fields.
Then $(M,g)$
is a GO-manifold if and only if for any $x\in M$ and any $v\in M_x$ there is
$X\in \mathfrak{g}$ such that $X(x)=v$ and $x$ is a critical point of the function
$y\in M\mapsto g_y(X,X)$. If $(M,g)$ is homogeneous, then the latter condition is equivalent
to the following one: for any $Y\in \mathfrak{g}$ the equality $g_x([Y,X],X)=0$ holds.
\end{lemma}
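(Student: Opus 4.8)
The plan is to reduce everything to the pointwise fact that for a Killing field $X$ the acceleration $\nabla_X X$ of its integral curves simultaneously governs the criticality of the length function and the geodesic property. First I would record the basic Killing computation: for any $Y$,
$$
Y\cdot g(X,X) = 2\,g(\nabla_Y X, X) = -2\,g(\nabla_X X, Y),
$$
the last equality being the Killing identity $g(\nabla_Y X, X)+g(\nabla_X X, Y)=0$. Hence $\operatorname{grad}\bigl(g(X,X)\bigr) = -2\,\nabla_X X$, and a point $x$ is critical for $y\mapsto g_y(X,X)$ if and only if $(\nabla_X X)(x)=0$. (Note also that $g(\nabla_X X,X)=0$ automatically, so every integral curve of a Killing field has constant speed, and ``orbit is a geodesic'' means precisely that its acceleration vanishes along it.)

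Second, I would prove the key lifting statement: if $(\nabla_X X)(x)=0$, then the whole integral curve $\gamma(t)=\exp(tX)(x)$ is a geodesic. Since $\gamma'(t)=X(\gamma(t))$, the acceleration is $\nabla_{\gamma'}\gamma' = (\nabla_X X)\circ\gamma$. As $\phi_t:=\exp(tX)$ is an isometry with $\phi_{t*}X=X$, the field $\nabla_X X$ is $\phi_t$-invariant, so $(\nabla_X X)(\phi_t(x)) = d\phi_t|_x\bigl((\nabla_X X)(x)\bigr)=0$ for all $t$; thus $\gamma$ has vanishing acceleration and is a geodesic. I expect this flow-invariance step to be the main point of the argument: it is what upgrades a single critical point into the geodesic property along the entire orbit, and it is exactly the phenomenon underlying Lemma~\ref{lemma1}.

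With these two facts the equivalence follows quickly. If $(M,g)$ is a GO-manifold, any $v\in M_x$ is the initial velocity of a geodesic that is an orbit of some $\exp(tX)$, $X\in\mathfrak g$; after rescaling $X$ so that $X(x)=v$, the geodesic property forces $(\nabla_X X)(x)=0$, i.e.\ $x$ is critical for $g_y(X,X)$. Conversely, given for each pair $(x,v)$ such an $X$ with $X(x)=v$, the lifting step shows the orbit of $\exp(tX)$ through $x$ is a geodesic with initial velocity $v$; as $(x,v)$ ranges over all of $TM$ this exhibits every geodesic as a one-parameter orbit, so $(M,g)$ is a GO-manifold.

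Finally, in the homogeneous case I would rewrite the criticality condition algebraically. Transitivity of $\mathfrak g$ means every tangent vector at $x$ is $Y(x)$ for some $Y\in\mathfrak g$, so $x$ is critical exactly when $\bigl(Y\cdot g(X,X)\bigr)(x)=0$ for all $Y\in\mathfrak g$. Using the Killing identity for $Y$ (which gives $g(\nabla_X Y,X)=0$) together with $[Y,X]=\nabla_Y X-\nabla_X Y$ yields
$$
g([Y,X],X)=g(\nabla_Y X,X)=\tfrac12\,Y\cdot g(X,X),
$$
so the criticality condition is equivalent to $g_x([Y,X],X)=0$ for all $Y\in\mathfrak g$. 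The sign convention relating the vector-field bracket to $[\cdot,\cdot]_{\mathfrak g}$ is irrelevant here, since only the vanishing of the inner product matters.
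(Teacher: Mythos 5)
Your proposal is correct and follows essentially the same route as the source: the paper itself imports this lemma from \cite{Nik2013n} without reproving it, and your argument --- the Killing identity giving $\operatorname{grad}\bigl(g(X,X)\bigr)=-2\nabla_XX$, the flow-invariance of $\nabla_XX$ under the isometries $\exp(tX)$ to upgrade a single critical point into a geodesic orbit (the mechanism behind Lemma~\ref{lemma1}), and the computation $g([Y,X],X)=g(\nabla_YX,X)=\tfrac12\,Y\cdot g(X,X)$ via $g(\nabla_XY,X)=0$ in the homogeneous case --- is precisely the standard proof given there. No gaps to report.
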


Now, we recall the following remarkable result.

\begin{prop}[\cite{Nik2013n}, Theorem 1]\label{ideal}
Let $(M,g)$ be a GO-manifold, $\mathfrak{g}$ is its Lie algebra of Killing vector fields.
Suppose that $\mathfrak{a}$ is an abelian ideal of $\mathfrak{g}$. Then any $X\in \mathfrak{a}$
has constant length on $(M,g)$.
\end{prop}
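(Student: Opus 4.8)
The plan is to reduce the claim to the pointwise identity $\nabla_X X = 0$ and to verify this at a fixed point by combining the geodesic orbit criterion with the two structural constraints coming from $\mathfrak{a}$ being an \emph{abelian ideal}. By Lemma \ref{lemma1} it suffices to prove that $\nabla_X X$ vanishes at an arbitrary point $x\in M$. (Equivalently, by Lemma \ref{le1} and the transitivity of $G$ on the GO-manifold, it would suffice to show $g_x([Y,X],X)=0$ for all $Y$ and all $x$; the two formulations are tied together by the standard Killing identity $Y\cdot g(X,X)=2g(\nabla_Y X,X)=-2g(Y,\nabla_X X)$.) I will establish the former at an arbitrary fixed $x$.

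First I would fix $x$ and set $v:=X(x)$, then invoke the GO-criterion of Lemma \ref{newcrit}: there is $Z\in\mathfrak{g}$ with $Z(x)=v$ and such that $x$ is a critical point of $y\mapsto g_y(Z,Z)$, which on a homogeneous space amounts to $\nabla_Z Z|_x=0$. Since $Z(x)=X(x)=v$, a short computation then yields
\[
\nabla_X X|_x=\nabla_v X=(\nabla_Z X)(x)=(\nabla_Z X-\nabla_X Z)(x)=[Z,X](x),
\]
where I used $(\nabla_X Z)(x)=\nabla_v Z=\nabla_Z Z|_x=0$ and where $[Z,X]=\nabla_Z X-\nabla_X Z$ is the Lie bracket of vector fields. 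Thus the obstruction $\nabla_X X|_x$ is realized as the value at $x$ of the Killing field $[Z,X]$.

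Then I would squeeze this vector from two sides, which I expect to be the crux of the argument. On one hand, $\mathfrak{a}$ is an \emph{ideal}, so $[Z,X]\in\mathfrak{a}$, and hence $\nabla_X X|_x=[Z,X](x)$ lies in the tangent space $\mathfrak{a}(x):=\{U(x)\,|\,U\in\mathfrak{a}\}$ to the orbit of $A=\exp(\mathfrak{a})$. On the other hand, $\mathfrak{a}$ is \emph{abelian}: for every $U\in\mathfrak{a}$ we have $[U,X]=0$, so the flow of $U$ preserves $X$, whence $g(X,X)$ is constant along the $A$-orbit; differentiating gives $g_x(U(x),\nabla_X X)=0$ for all $U\in\mathfrak{a}$, i.e. $\nabla_X X|_x\perp\mathfrak{a}(x)$ (equivalently, $g_x(U(x),\nabla_X X)=-g_x([U,X],X)=0$). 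Consequently $\nabla_X X|_x\in\mathfrak{a}(x)\cap\mathfrak{a}(x)^{\perp}=\{0\}$, so $\nabla_X X|_x=0$. As $x$ is arbitrary, $\nabla_X X\equiv 0$ and $X$ has constant length by Lemma \ref{lemma1}.

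The main obstacle is precisely the passage from the GO-guarantee---which furnishes only \emph{some} Killing field $Z$ with critical length in the direction $v$, not the field $X$ itself---to a statement about $X$. The mechanism that bridges the gap is the identification $\nabla_X X|_x=[Z,X](x)$ together with the observation that the ideal property and the abelian property pull this single vector into complementary subspaces. In the write-up I would double-check the two bracket conventions (the paper's identification satisfies $[\widetilde{U},\widetilde{V}]=-\widetilde{[U,V]_{\mathfrak{g}}}$), but since ideal-membership and the vanishing $[U,X]=0$ are insensitive to this sign, the argument is unaffected; the degenerate case $v=0$ is immediate, as then $\nabla_X X|_x=\nabla_{X(x)}X=0$ directly.
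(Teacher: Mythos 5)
Your proof is correct and is essentially the paper's own argument (given there for the generalization, Theorem \ref{ideal2}): the same key Lemma \ref{newcrit} supplies the auxiliary Killing field $Z$ with $Z(x)=X(x)$, the ideal property is used to place $[Z,X]$ in $\mathfrak{a}$, and the abelian property disposes of the $\mathfrak{a}$-directions. You merely package it dually --- showing the single vector $\nabla_X X|_x=[Z,X](x)$ lies in $\mathfrak{a}(x)\cap\mathfrak{a}(x)^{\perp}$, whereas the paper verifies $g_x([Y,X],X)=0$ by splitting the test field $Y$ into an $\mathfrak{a}$-part and an $x$-orthogonal part and computing with $W=X-Z$, $W(x)=0$ --- an equivalent bookkeeping of the same two steps.
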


As is noted in \cite{Nik2013n}, Proposition \ref{ideal} could be generalized for geodesic orbit spaces.
For the reader's convenience, we provide also the proof of the corresponding result.

\begin{theorem}\label{ideal2}
Let $(M=G/H,g)$ be a geodesic orbit Riemannian space.
Suppose that $\mathfrak{a}$ is an abelian ideal of $\mathfrak{g}=\operatorname{Lie}(G)$. Then any $X\in \mathfrak{a}$ (as a Killing vector field)
has constant length on $(M,g)$. As a corollary, $g(X,Y)\equiv \const$ on $M$ for every $X,Y \in \mathfrak{a}$.
\end{theorem}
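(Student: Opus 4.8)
The plan is to reduce the statement to the pointwise identity $g_p([Y,X],X)=0$ for every $Y\in\mathfrak{g}$ and every $p\in M$, and then to invoke the transitive case of Lemma \ref{le1}, which asserts precisely that condition (\ref{e0}) holding everywhere forces $X$ to be of constant length. (The mechanism behind this is that for Killing fields $g([Y,X],X)=\tfrac12\,Y\cdot g(X,X)$, so the vanishing of all these quantities expresses that $g(X,X)$ has zero differential.) So I would fix an arbitrary point $p\in M$ and set $v=X(p)$.

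Since $(M=G/H,g)$ is a geodesic orbit \emph{space}, the geodesic issuing from $p$ with velocity $v$ is the orbit of a one-parameter subgroup $\exp(tZ)$ with $Z\in\mathfrak{g}=\operatorname{Lie}(G)$; after normalization we may assume $Z(p)=X(p)$. As for any such geodesic (Killing) vector, $\nabla_Z Z$ vanishes at $p$, and the skew-symmetry of $\nabla Z$ then gives $g_p([Y,Z],Z)=0$ for all $Y\in\mathfrak{g}$; this is exactly the critical-point characterization of Lemma \ref{newcrit}. Next I would introduce $W=X-Z\in\mathfrak{g}$, a Killing field with $W(p)=0$. For such a field one has $[Y,W](p)=\nabla_{Y(p)}W$ and $[Z,W](p)=\nabla_{Z(p)}W$; combining $X=Z+W$, $Z(p)=X(p)$, the identity $[Z,W]=[Z,X]$, the skew-symmetry of $\nabla W$, and the geodesic-vector relation for $Z$, a two-line computation yields
\begin{equation*}
g_p([Y,X],X)=g_p([Y,Z],Z)-g_p(Y,[Z,X])=-g_p(Y,[Z,X])
\end{equation*}
for every $Y\in\mathfrak{g}$.

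The decisive step, and the one where the abelian-ideal hypothesis enters, is to test this with the special choice $Y=[Z,X]$. Since $\mathfrak{a}$ is an ideal and $X\in\mathfrak{a}$, we have $[Z,X]\in\mathfrak{a}$, so $Y=[Z,X]$ is an admissible element of $\mathfrak{g}$; and since $\mathfrak{a}$ is abelian, $[[Z,X],X]=0$. Hence the left-hand side of the displayed identity vanishes, leaving $0=-g_p([Z,X],[Z,X])$, that is $[Z,X](p)=0$. Substituting this back, $g_p([Y,X],X)=-g_p(Y,[Z,X](p))=0$ for all $Y\in\mathfrak{g}$. As $p$ was arbitrary, condition (\ref{e0}) holds at every point, and Lemma \ref{le1} delivers constant length of $X$. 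The final corollary then follows by polarization: for $X,Y\in\mathfrak{a}$ we also have $X+Y\in\mathfrak{a}$, so $g(X,X)$, $g(Y,Y)$ and $g(X+Y,X+Y)$ are all constant, whence $g(X,Y)=\tfrac12\bigl(g(X+Y,X+Y)-g(X,X)-g(Y,Y)\bigr)$ is constant as well.

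The genuinely delicate point is that the geodesic vector $Z$ must be found inside $\mathfrak{g}=\operatorname{Lie}(G)$ itself, not merely in the full isometry algebra — this is exactly where the geodesic orbit \emph{space} assumption is used rather than the weaker GO-manifold property — together with the observation that the test field $[Z,X]$ is again admissible because $\mathfrak{a}$ is an ideal. Once these two facts are in place, the crux is the one-line use of the abelian relation $[[Z,X],X]=0$ to force $[Z,X](p)=0$, and everything else is the routine bracket computation sketched above.
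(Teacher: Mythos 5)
Your proof is correct, and its skeleton coincides with the paper's: both fix a point $p\in M$, use Lemma \ref{newcrit} (applied with $\mathfrak{g}=\operatorname{Lie}(G)$, which is exactly where the geodesic orbit \emph{space} hypothesis, rather than the GO-manifold one, enters) to produce $Z\in\mathfrak{g}$ with $Z(p)=X(p)$ and $g_p([V,Z],Z)=0$ for all $V\in\mathfrak{g}$, set $W=X-Z$ with $W(p)=0$, and exploit $0=W\cdot g(Y,Z)|_p$ together with $[W,Z]=[X,Z]\in\mathfrak{a}$. Where you diverge is the endgame: the paper splits an arbitrary $Y$ as $Y_1+Y_2$ with $Y_1\in\mathfrak{a}$ (handled by the abelian relation $[Y_1,X]=0$) and $Y_2$ $g_p$-orthogonal to $\mathfrak{a}$ (handled by $g_p(Y_2,[W,Z])=0$, using that $[W,Z]$ lies in the ideal $\mathfrak{a}$), whereas you first establish the identity $g_p([Y,X],X)=-g_p(Y,[Z,X])$ for \emph{all} $Y\in\mathfrak{g}$ and then test it with the single choice $Y=[Z,X]\in\mathfrak{a}$, where $[[Z,X],X]=0$ forces $g_p([Z,X],[Z,X])=0$, i.e. $[Z,X](p)=0$. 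Your variant avoids the orthogonal decomposition of $Y$ altogether and yields the slightly stronger intermediate fact that the Killing field $[Z,X]$ vanishes at $p$, while the paper's case split keeps the two uses of the hypothesis ($\mathfrak{a}$ abelian, $\mathfrak{a}$ an ideal) more visibly separate; both arguments then close identically, via Lemma \ref{newcrit} (equivalently, the transitive case of Lemma \ref{le1} and condition (\ref{e0}) at every point) and the same polarization step for the final assertion.
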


\begin{proof}
Let $x$ be any point in $M$. We will prove that $x$ is a critical
point of the function $y\in M\mapsto g_y(X,X)$. Since $(M=G/H,g)$ is
homogeneous, then (by Lemma \ref{newcrit}) it suffices to prove
that $g_x([Y,X],X)=0$ for every $Y \in \mathfrak{g}$.

Consider any $Y\in \mathfrak{a}$, then $Y\cdot g(X,X)=2g([Y,X],X)=0$ on $M$, since $\mathfrak{a}$ is abelian.

Now, consider $Y\in \mathfrak{g}$ such that $g_x(Y,U)=0$ for every
$U \in \mathfrak{a}$. We will prove that $g_x([Y,X],X)=0$. By
Lemma \ref{newcrit}, for the vector $X(x)\in M_x$ there is a
Killing field $Z\in \mathfrak{g}$ such that $Z(x)=X(x)$ and
$g_x([V,Z],Z)=0$ for any $V\in \mathfrak{g}$. In particular,
$g_x([Y,Z],Z)=0$. Now, $W=X-Z$ vanishes at $x$ and we get
$$
g_x([Y,X],X)=g_x([Y,Z+W],Z+W)=g_x([Y,Z+W],Z)=
$$
$$
g_x([Y,Z],Z)+g_x([Y,W],Z)=g_x([Y,W],Z).
$$
Note that $g_x([Y,W],Z)=-g_x([W,Y],Z)=g_x(Y,[W,Z])=0$ due to
$W(x)=0$ ($0=W\cdot g(Y,Z)|_x=g_x([W,Y],Z)+g_x(Y,[W,Z])$) and
$[W,Z]=[X,Z] \in \mathfrak{a}$. Therefore, $g_x([Y,X],X)=0$.

Since every $Y\in \mathfrak{g}$ could be represented as $Y=Y_1+Y_2$, where $Y_1\in \mathfrak{a}$ and $g_x(Y_2,\mathfrak{a})=0$, then
$x$ is a critical point of the function $y\in M\mapsto g_y(X,X)$.
Since every $x\in M$ is a critical
point of the function $y\in M\mapsto g_y(X,X)$, then
$X$ has constant length on $(M,g)$.

The last assertion follows from the equality $2g(X,Y)=g(X+Y,X+Y)-g(X,X)-g(Y,Y)$.
\end{proof}

\begin{corollary}\label{ideal3}
Every geodesic orbit Riemannian space $(M=G/H, g)$ with non-semisimple group $G$ has non-trivial
Killing vector fields of constant length.
\end{corollary}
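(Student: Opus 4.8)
The plan is to reduce the statement to Theorem \ref{ideal2} by exhibiting a non-trivial abelian ideal in $\mathfrak{g}=\operatorname{Lie}(G)$ whenever $G$ is non-semisimple. Since $G$ is assumed to act effectively on $G/H$, the identification $U \mapsto \widetilde{U}$ of $\mathfrak{g}$ with Killing fields is injective, so any non-zero element of such an ideal yields a non-trivial Killing vector field; Theorem \ref{ideal2} then guarantees that it has constant length, which is exactly what is claimed.

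First I would observe that $G$ being non-semisimple means $\mathfrak{g}$ is non-semisimple, i.e. the radical $\mathfrak{r}(\mathfrak{g})$ is non-trivial. Next I would produce a non-trivial abelian ideal. The cleanest candidate is $\mathfrak{a} := C(\mathfrak{n}(\mathfrak{g}))$, the center of the nilradical, which is already singled out in 4) of Proposition \ref{the.eigen0}. Since $\mathfrak{r}(\mathfrak{g}) \neq 0$, the nilradical $\mathfrak{n}(\mathfrak{g})$ is a non-trivial nilpotent ideal, hence has non-trivial center, so $\mathfrak{a} \neq 0$. It is abelian by construction, and I would verify that it is an ideal of $\mathfrak{g}$: for $c \in \mathfrak{a}$ and $X \in \mathfrak{g}$ one has $[X,c] \in \mathfrak{n}(\mathfrak{g})$ because $\mathfrak{n}(\mathfrak{g})$ is an ideal, and for any $n \in \mathfrak{n}(\mathfrak{g})$ the Jacobi identity gives $[[X,c],n] = [X,[c,n]] - [c,[X,n]] = 0$, since $[c,n]=0$ and $[X,n]\in\mathfrak{n}(\mathfrak{g})$; hence $[X,c] \in C(\mathfrak{n}(\mathfrak{g}))=\mathfrak{a}$.

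With $\mathfrak{a}$ in hand, I would apply Theorem \ref{ideal2} to conclude that every $X \in \mathfrak{a}$ has constant length on $(M,g)$. Choosing any non-zero $X \in \mathfrak{a}$ and invoking the injectivity of $U \mapsto \widetilde{U}$ then produces the desired non-trivial Killing vector field of constant length.

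There is essentially no serious obstacle here; the only point requiring care is the non-triviality of $\mathfrak{a}$, which rests on the standard facts that a non-zero radical forces a non-zero nilradical and that a non-trivial nilpotent Lie algebra has non-trivial center. An equally valid alternative, bypassing the nilradical, is to take the last non-zero term $\mathfrak{r}^{(k)}$ of the derived series of $\mathfrak{r}(\mathfrak{g})$: a short induction shows each $\mathfrak{r}^{(j)}$ is an ideal of $\mathfrak{g}$, and the last non-zero one is abelian, so it again supplies a non-trivial abelian ideal to feed into Theorem \ref{ideal2}.
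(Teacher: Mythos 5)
Your proposal is correct and follows essentially the same route as the paper: the paper's proof likewise observes that a non-semisimple $\mathfrak{g}$ has a non-trivial abelian ideal, explicitly naming the center of the nilradical $C(\mathfrak{n}(\mathfrak{g}))$ as an instance, and then applies Theorem \ref{ideal2}. You merely supply the standard verifications (non-triviality of $C(\mathfrak{n}(\mathfrak{g}))$, the Jacobi-identity check that it is an ideal, and effectivity of the action) that the paper leaves implicit.
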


\begin{proof} If the Lie algebra $\mathfrak{g}=\operatorname{Lie}(G)$ is non semisimple, then it has a non-trivial abelian ideal $\mathfrak{a}$
(for instance, this property has the center of the nilradical $\mathfrak{n(g)}$ of $\mathfrak{g}$).
Now, it suffices to apply Theorem \ref{ideal2}.
\end{proof}

\smallskip

We recall some other important properties of geodesic orbit spaces.
Any semisimple Lie algebra $\mathfrak{s}$ is a direct Lie algebra sum of its compact and noncompact parts
($\mathfrak{s}=\mathfrak{s}_{c} \oplus \mathfrak{s}_{nc}$).
The following proposition is asserted in \cite{Gor96}, a detailed proof could be found in \cite{GorNik2018}.

\begin{prop}\label{gnc}
Let $(G/H, g)$ be a connected geodesic orbit space and let $\mathfrak{s}$ be any Levi factor of $G$.
Then the noncompact part $\mathfrak{s}_{nc}$ of $\mathfrak{s}$ commutes with the radical $\mathfrak{r}(\mathfrak{g})$.
\end{prop}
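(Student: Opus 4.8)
The plan is to reduce the statement to a claim about hyperbolic elements and then exploit the spectral restrictions furnished by Theorem~\ref{ideal2} and Proposition~\ref{the.eigen0}. Fix a Cartan decomposition $\mathfrak{s}_{nc}=\mathfrak{k}'\oplus\mathfrak{p}'$ of the noncompact part; every $H\in\mathfrak{p}'$ is hyperbolic, so $\ad(H)$ acts semisimply with real eigenvalues in every finite-dimensional representation, in particular on the $\mathfrak{s}_{nc}$-module $\mathfrak{r}(\mathfrak{g})$, i.e. $\ad(H)|_{\mathfrak{r}(\mathfrak{g})}$ is real-diagonalizable. Since $\mathfrak{p}'$ generates $\mathfrak{s}_{nc}$ (because $[\mathfrak{p}',\mathfrak{p}']=\mathfrak{k}'$) and, by the Jacobi identity, $[H_1,\mathfrak{a}]=[H_2,\mathfrak{a}]=0$ forces $[[H_1,H_2],\mathfrak{a}]=0$, it is enough to control each such $H$. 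Moreover $[\mathfrak{s}_{nc},\mathfrak{r}(\mathfrak{g})]\subset\mathfrak{n}(\mathfrak{g})$, and using $\mathfrak{r}(\mathfrak{g})=C_{\mathfrak{r}(\mathfrak{g})}(\mathfrak{s})\oplus[\mathfrak{s},\mathfrak{r}(\mathfrak{g})]$ with $[\mathfrak{s},\mathfrak{r}(\mathfrak{g})]\subset\mathfrak{n}(\mathfrak{g})$ and $[\mathfrak{s}_{nc},C_{\mathfrak{r}(\mathfrak{g})}(\mathfrak{s})]=0$, the whole assertion reduces to proving $[\mathfrak{s}_{nc},\mathfrak{n}(\mathfrak{g})]=0$. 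As $\ad(H)|_{\mathfrak{n}(\mathfrak{g})}$ is real-diagonalizable, this amounts to excluding a nonzero real eigenvalue.

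The core of the argument handles abelian ideals. Let $\mathfrak{a}$ be an abelian ideal of $\mathfrak{g}$ and suppose $[H,R]=\lambda R$ with $R\in\mathfrak{a}$, $R\neq0$, and $\lambda\neq0$ real. By Theorem~\ref{ideal2} the field $R$ has constant length, so Proposition~\ref{the.eigen0} applies with $X:=R$. Taking $Z:=H$ we compute $[[R,H],H]=\lambda^2R$, and since $R=\lambda^{-1}[H,R]\in[R,\mathfrak{g}]$ we get $\lambda^2R\in[R,\mathfrak{g}]$; hence part~2) of Proposition~\ref{the.eigen0} yields $[R,H]=0$, contradicting $\lambda\neq0$. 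Thus $\ad(H)|_{\mathfrak{a}}$ has no nonzero real eigenvalue and, being real-diagonalizable, vanishes. Consequently $[\mathfrak{s}_{nc},\mathfrak{a}]=0$ for every abelian ideal $\mathfrak{a}$ of $\mathfrak{g}$, in particular for $\mathfrak{a}=C(\mathfrak{n}(\mathfrak{g}))$.

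It remains to propagate triviality from $C(\mathfrak{n}(\mathfrak{g}))$ to the whole nilradical, and this is where I expect the main difficulty. I would induct along the upper central series $C(\mathfrak{n}(\mathfrak{g}))=Z_1\subset Z_2\subset\cdots\subset Z_p=\mathfrak{n}(\mathfrak{g})$, in which each successive quotient $Z_{i+1}/Z_i=C(\mathfrak{n}(\mathfrak{g})/Z_i)$ is an abelian, $\mathfrak{s}$-invariant ideal of $\mathfrak{g}/Z_i$. If the mechanism of Theorem~\ref{ideal2} survives passage to $\mathfrak{g}/Z_i$ — i.e. if elements of this abelian ideal are again of constant length on an associated geodesic orbit structure — then the computation above shows that $\mathfrak{s}_{nc}$ acts trivially on $Z_{i+1}/Z_i$; combined with the inductive hypothesis $[H,Z_i]=0$ this gives $\ad(H)^2(Z_{i+1})=0$, and since $\ad(H)$ is semisimple we conclude $[H,Z_{i+1}]=0$. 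Completing the induction yields $[\mathfrak{s}_{nc},\mathfrak{n}(\mathfrak{g})]=0$, hence $[\mathfrak{s}_{nc},\mathfrak{r}(\mathfrak{g})]=0$ by the reduction of the first paragraph.

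The hard part is precisely this quotient step: control of the abelian ideals of $\mathfrak{g}$ alone is insufficient, as the example $\mathfrak{g}=\mathfrak{h}_3\rtimes\mathfrak{sl}_2(\mathbb{R})$ shows, where $\mathfrak{sl}_2(\mathbb{R})$ acts irreducibly on the $2$-dimensional complement to the center of the Heisenberg algebra $\mathfrak{h}_3$. There the only abelian ideal of $\mathfrak{g}$ is the center of $\mathfrak{h}_3$, on which $\mathfrak{s}_{nc}=\mathfrak{sl}_2(\mathbb{R})$ already acts trivially, yet $\mathfrak{s}_{nc}$ does not centralize $\mathfrak{n}(\mathfrak{g})=\mathfrak{h}_3$; so the conclusion must fail to hold for such $\mathfrak{g}$ realized as a geodesic orbit algebra, and ruling out these configurations forces one to invoke the geodesic orbit hypothesis on the induced structures, not merely on $\mathfrak{g}$. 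This is the technical content carried out in detail in \cite{GorNik2018}.
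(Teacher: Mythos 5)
The paper contains no internal proof of Proposition \ref{gnc} to compare against: it is quoted from \cite{Gor96}, with the detailed proof delegated to \cite{GorNik2018}. Judged on its own merits, your attempt is correct exactly as far as you claim it is, and no further. The reduction in your first paragraph is sound: $\mathfrak{r(g)}=C_{\mathfrak{r(g)}}(\mathfrak{s})\oplus[\mathfrak{s},\mathfrak{r(g)}]$ with $[\mathfrak{s},\mathfrak{r(g)}]\subset\mathfrak{n(g)}$ does reduce everything to $[\mathfrak{s}_{nc},\mathfrak{n(g)}]=0$, and hyperbolicity of $H\in\mathfrak{p}'$ together with $[\mathfrak{p}',\mathfrak{p}']=\mathfrak{k}'$ legitimately localizes the problem to single hyperbolic elements acting real-diagonalizably on the ideal $\mathfrak{r}(\mathfrak{g})$. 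Your core computation is also correct and is a genuinely nice use of the paper's own machinery: if $[H,R]=\lambda R$ with $0\neq R$ in an abelian ideal $\mathfrak{a}$ and real $\lambda\neq 0$, then $R$ has constant length by Theorem \ref{ideal2}, one has $[[R,H],H]=\lambda^2 R\in[R,\mathfrak{g}]$ since $R=-\lambda^{-1}[R,H]$, and part 2) of Proposition \ref{the.eigen0} forces $[R,H]=0$, a contradiction; combined with real-diagonalizability this yields $[\mathfrak{s}_{nc},\mathfrak{a}]=0$ for every abelian ideal, in particular for $C(\mathfrak{n}(\mathfrak{g}))$.

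The genuine gap is the step you yourself flag: propagating triviality from $C(\mathfrak{n}(\mathfrak{g}))$ to all of $\mathfrak{n}(\mathfrak{g})$. (By Proposition \ref{strucnilr} the nilradical is at most two-step nilpotent, so your induction is really a single step — trivializing the action on $\mathfrak{n}(\mathfrak{g})/C(\mathfrak{n}(\mathfrak{g}))$ — but that single step is exactly what is missing.) Your conditional clause ``if the mechanism of Theorem \ref{ideal2} survives passage to $\mathfrak{g}/Z_i$'' carries all the weight: Theorem \ref{ideal2} is a statement about Killing fields on a geodesic orbit space whose transitive group has Lie algebra $\mathfrak{g}$, and nothing in your argument produces a geodesic orbit space on which $\mathfrak{g}/Z_i$ acts, nor a correspondence under which elements of the abelian ideal $Z_{i+1}/Z_i$ become Killing fields of constant length there. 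What is actually needed is a submersion-type lemma — the quotient of a GO space by a closed normal subgroup, equipped with the induced metric, is again GO — plus care with effectiveness and with the fact that the nilradical of $\mathfrak{g}/Z_i$ may strictly contain $\mathfrak{n(g)}/Z_i$; this is the geometric input supplied in \cite{GorNik2018} (see also the structure results in \cite{Nik2016}), and you invoke it rather than prove it. Your example $\mathfrak{h}_3\rtimes\mathfrak{sl}_2(\mathbb{R})$ correctly diagnoses why the purely algebraic facts you marshaled cannot close the gap: there the $\ad(H)$-eigenvalues $\pm\lambda$ on $\mathfrak{n}/C(\mathfrak{n})$ pair into the center, so $[\mathfrak{n}_\lambda,\mathfrak{n}_{-\lambda}]\subset C(\mathfrak{n})$ is compatible with Theorem \ref{ideal2}, Proposition \ref{the.eigen0}, and two-step nilpotency simultaneously; ruling such configurations out requires using the GO hypothesis again at the quotient level. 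So what you have is an honest partial argument — the ``easy half'' of the proposition — with the decisive step deferred to the literature rather than established.
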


\begin{remark}\label{dirdecgo}
For a geodesic orbit space $(G/H, g)$ we have a direct Lie algebra sum \linebreak
$\mathfrak{g}=(\mathfrak{r(g)}\rtimes \mathfrak{s}_c)\oplus \mathfrak{s}_{nc}$
by Proposition \ref{gnc}. Moreover, we can represent $\mathfrak{s}_{nc}$ as a direct sum of simple noncompact ideals.
This decomposition is useful for applying of Theorem \ref{decomp1}.
\end{remark}

\begin{prop}[C.~Gordon, \cite{Gor96}]\label{strucnilr}
Let $(G/H, \rho)$ be a geodesic orbit space. Then the nilradical $\mathfrak{n(g)}$ of the Lie algebra $\mathfrak{g}=\Lie(G)$
is commutative or two-step nilpotent.
\end{prop}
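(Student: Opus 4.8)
The goal is to show that $[\mathfrak{n(g)},[\mathfrak{n(g)},\mathfrak{n(g)}]]=0$, i.e. that the derived subalgebra $[\mathfrak{n(g)},\mathfrak{n(g)}]$ is contained in the center $C(\mathfrak{n(g)})$. The plan is to first record what the constant-length machinery already yields on the center, and then to promote that information from the center to the whole nilradical by invoking the sharp geodesic-orbit criterion of Lemma \ref{newcrit}.

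First I would exploit that $C(\mathfrak{n(g)})$ is an abelian ideal of $\mathfrak{g}$. By Theorem \ref{ideal2}, every $X\in C(\mathfrak{n(g)})$ is a Killing vector field of constant length and $g(\cdot,\cdot)$ restricts to a constant, hence $\ad(\mathfrak{g})$-invariant, inner product on $C(\mathfrak{n(g)})$. Since $X\in C(\mathfrak{n(g)})$ and $C(\mathfrak{n(g)})$ is an ideal, one has $\ad(X)\mathfrak{g}\subset C(\mathfrak{n(g)})$, and then $(\ad X)^2\mathfrak{g}\subset [X,C(\mathfrak{n(g)})]=0$; combined with part 4 of Proposition \ref{the.eigen0}, this shows that all central elements of the nilradical are absolute zero divisors. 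This is the correct base step of the filtration I want to iterate.

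Then, to reach the whole nilradical, I would use Lemma \ref{newcrit} in its precise form: fixing a reductive decomposition $\mathfrak{g}=\mathfrak{h}\oplus\mathfrak{m}$ with metric inner product $\langle\cdot,\cdot\rangle$ on $\mathfrak{m}$, the geodesic-orbit property means that for each $X$ there is a compensating $A\in\mathfrak{h}$ with $\langle[A+X,Y]_{\mathfrak{m}},X_{\mathfrak{m}}\rangle=0$ for all $Y\in\mathfrak{g}$. Rewriting this as an identity of the shape $D(X)=\ad(X)^{\ast}X$, where $D=\ad(A)|_{\mathfrak{m}}$ is a skew derivation arising from the isotropy, transports the constant-length relations (\ref{e0}) and (\ref{e00}) from the center to an arbitrary $X\in\mathfrak{n(g)}$. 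Because every derivation preserves the lower central series of $\mathfrak{n(g)}$, I would grade $\mathfrak{n(g)}$ by this series and run a weight argument: the skew derivation $D$ preserves the grading while $\ad(X)^{\ast}$ acts in a controlled way on it, so iterating the identity over three consecutive steps forces the third-order brackets $[\mathfrak{n(g)},[\mathfrak{n(g)},\mathfrak{n(g)}]]$ to be simultaneously isotropic and central, hence zero.

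The main obstacle is exactly the bookkeeping of the compensating isotropy term $A$, equivalently $D$, in the geodesic-orbit identity: one must show that the skew contribution coming from $\mathfrak{h}$ cannot cancel the symmetric contribution of $\ad(X)^{\ast}$ across three successive terms of the lower central series. Controlling this is made manageable by Proposition \ref{gnc} and Remark \ref{dirdecgo}, which separate the radical, where $\mathfrak{n(g)}$ sits, from the noncompact semisimple part, so that the whole computation can be localized to a nilmanifold-type quotient on which the weight argument closes up.
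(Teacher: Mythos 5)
You should first note that the paper contains no proof of this proposition at all: it is quoted from C.~Gordon \cite{Gor96} (with detailed arguments for related assertions in \cite{GorNik2018}), so your attempt must stand entirely on its own — and it does not. Your base step is correct but essentially empty: for $X\in C(\mathfrak{n(g)})$ the identity $(\ad(X))^2=0$ is pure algebra, since $C(\mathfrak{n(g)})$ is an abelian ideal of $\mathfrak{g}$, whence $[X,\mathfrak{g}]\subset C(\mathfrak{n(g)})$ and $[X,[X,\mathfrak{g}]]\subset[X,C(\mathfrak{n(g)})]=0$; neither Theorem \ref{ideal2} nor the geodesic orbit hypothesis is needed, and the statement to be proved ($[\mathfrak{n(g)},[\mathfrak{n(g)},\mathfrak{n(g)}]]=0$) concerns exactly the non-central part of $\mathfrak{n(g)}$, about which this step says nothing.

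The genuine gap is the promotion step, and one of its ingredients is demonstrably false. You claim that Lemma \ref{newcrit} "transports the constant-length relations (\ref{e0}) and (\ref{e00}) from the center to an arbitrary $X\in\mathfrak{n(g)}$". It cannot: on a geodesic orbit space a non-central element of $\mathfrak{n(g)}$ does \emph{not} have constant length — this is precisely the content of Theorem \ref{on_nilrad1} (whose proof, note, uses Proposition \ref{strucnilr}, so that circle must be avoided here in any case). For such $X$, Lemma \ref{newcrit} yields only the first-order identity $g_x([Y,Z],Z)=0$ at a single point, for some compensating field $Z$ with $Z(x)=X(x)$, equivalently the compensated relation with an $X$-dependent element $A(X)\in\mathfrak{h}$; no analogue of the second-order relation (\ref{e00}), which was derived from constancy of the length on all of $M$, is available. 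Beyond this, the announced "weight argument" is pure assertion: the derivation $D=\ad(A(X))$ preserves the lower central series of $\mathfrak{n(g)}$ only as a \emph{filtration}, not any chosen grading; it varies with $X$ (nonlinearly, with no uniqueness or regularity), so it cannot be iterated "over three consecutive steps" as if it were a fixed operator; and the suppressed $\mathfrak{m}$-projections hidden in your shorthand $D(X)=\ad(X)^{\ast}X$ are exactly where the cancellation you worry about can occur. Finally, Proposition \ref{gnc} and Remark \ref{dirdecgo} only split off the noncompact Levi factor $\mathfrak{s}_{nc}$; the compact Levi factor and the isotropy still act nontrivially on the radical, and you give no argument that the nilradical orbit with its induced metric is itself a geodesic orbit nilmanifold. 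Carrying out precisely this reduction, together with the two-step theorem for geodesic orbit nilmanifolds, \emph{is} Gordon's theorem; your sketch labels it "manageable" but does not manage it.
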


Suppose that  $X\in \mathfrak{g}$ has constant length on a geodesic orbit space $(G/H, g)$, then
$[A_1,A_2]\subset A_2$ and
$C(\mathfrak{n}(\mathfrak{g}))\subset A_1$ by Propositions \ref{the.eigen2} and  \ref{the.eigen3} for a given Killing field  $X$ of constant length
($A_1=\Ker(L^2)$ and $A_2=\Imm(L^2)$ as in Theorem \ref{the.eigen1}).
Moreover,  $\mathfrak{n}(\mathfrak{g})$ is at most 2-step nilpotent by Proposition \ref{strucnilr}. All these considerations lead to the following

\begin{conjecture}\label{conjec3}
If $X\in \mathfrak{g}$ has constant length on a geodesic orbit space $(G/H, g)$, then
$\mathfrak{n}(\mathfrak{g}) \subset A_1$.
\end{conjecture}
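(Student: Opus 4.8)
The plan is to show that every element of $\mathfrak{n}(\mathfrak{g})$ lies in $A_1=\Ker(L^2)$, i.e. that $\ad(X)$ acts nilpotently (in fact with $L^2=0$) on the nilradical. By Proposition \ref{strucnilr}, $\mathfrak{n}(\mathfrak{g})$ is at most $2$-step nilpotent, so $[\mathfrak{n}(\mathfrak{g}),\mathfrak{n}(\mathfrak{g})]\subset C(\mathfrak{n}(\mathfrak{g}))$, and by part 1) of Proposition \ref{the.eigen3} we already know $C(\mathfrak{n}(\mathfrak{g}))\subset \Ker(L)\subset A_1$. Thus the derived subalgebra of the nilradical is controlled, and the task reduces to handling a complement of $C(\mathfrak{n}(\mathfrak{g}))$ inside $\mathfrak{n}(\mathfrak{g})$.

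First I would use that $L=\ad(X)$ restricts to a derivation of the ideal $\mathfrak{n}(\mathfrak{g})$, and decompose $\mathfrak{n}(\mathfrak{g})$ according to the Fitting/root-space decomposition of Theorem \ref{the.eigen1} and Proposition \ref{the.eigen2}. Writing $N_0=\mathfrak{n}(\mathfrak{g})\cap A_1$ and $N_j=\mathfrak{n}(\mathfrak{g})\cap V_j$ (the latter makes sense because $L$ preserves $\mathfrak{n}(\mathfrak{g})$ and commutes with its own semisimple part $L_s$, which is again a derivation vanishing on $\Ker(L)$), the goal is to prove that all the $N_j$ with $j\geq 1$ vanish. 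The bracket relations of Proposition \ref{the.eigen2}, namely $[V_i,V_j]\subset V_p\oplus V_q$ with $\beta_p=|\beta_i-\beta_j|$ and $\beta_q=\beta_i+\beta_j$, together with the $2$-step nilpotency giving $[N_i,[N_j,N_k]]=0$, should force a contradiction with the existence of a nonzero eigenvalue: taking a root $\beta_j$ and iterating brackets builds elements at frequencies $2\beta_j,3\beta_j,\dots$ inside the nilradical, but nilpotency of $\mathfrak{n}(\mathfrak{g})$ must eventually truncate this tower.

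The key analytic input I would combine with the algebra is Proposition \ref{the.eigen4}: for $i\neq j$ with $\beta_j\neq 2\beta_i$ we have orthogonality $g(V_i,V_j)=0$ and $g([V_i,V_i],V_j)=0$ on $M$, and more generally $g([X,Y],[X,Z])=0$ whenever $[[X,Y],Z]\in[X,\mathfrak{g}]$. Since every bracket of nilradical elements lands back in $\mathfrak{n}(\mathfrak{g})\subset A_2+A_1$ and since $A_2\subset[X,\mathfrak{g}]$, the nilpotency hypothesis supplies many relations of the form $[[X,Y],Z]\in A_2\subset[X,\mathfrak{g}]$, hence many vanishing inner products. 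The plan is to feed these orthogonality relations, applied at a point of $M$ together with the fact that $g$ is positive definite, into the root-space tower to conclude that a nonzero $N_j$ would produce an infinite strictly-increasing or nontruncating sequence of nonzero root spaces inside a finite-dimensional nilpotent algebra, which is absurd; hence $N_j=0$ for all $j\geq1$ and $\mathfrak{n}(\mathfrak{g})=N_0\subset A_1$.

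The main obstacle, I expect, is the bookkeeping of the frequency tower: Proposition \ref{the.eigen2} only constrains $[V_i,V_j]$ up to the two frequencies $|\beta_i-\beta_j|$ and $\beta_i+\beta_j$, and one must rule out cancellations and degenerate coincidences among the $\beta$'s (for instance the excluded case $\beta_j=2\beta_i$ in Proposition \ref{the.eigen4}) while simultaneously exploiting $2$-step nilpotency, which only kills triple brackets and not the existence of a single nonzero $\beta$. Converting ``the tower cannot terminate consistently inside a $2$-step nilpotent algebra'' into a clean contradiction — rather than merely a growing family of subspaces — is the delicate point; most likely it requires choosing the largest frequency $\beta_l$ present in $\mathfrak{n}(\mathfrak{g})$ and showing that $[N_l,N_l]$ would have to live at frequency $2\beta_l$ outside $\mathfrak{n}(\mathfrak{g})$ yet also inside it, forcing $[N_l,N_l]=0$, and then peeling off frequencies one at a time while the orthogonality of Proposition \ref{the.eigen4} prevents the lower strata from compensating.
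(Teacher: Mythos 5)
You were asked to prove a statement that the paper itself does not prove: it is stated there as Conjecture \ref{conjec3}, offered as an open problem motivated by Propositions \ref{the.eigen2}, \ref{the.eigen3} and \ref{strucnilr}. So there is no proof in the paper to compare against, and your text, by its own admission (``the delicate point''), is a plan rather than a proof. More importantly, the central mechanism of the plan demonstrably cannot produce the desired contradiction.

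Your strategy is to set $N_0=\mathfrak{n}(\mathfrak{g})\cap A_1$, $N_j=\mathfrak{n}(\mathfrak{g})\cap V_j$ (this decomposition is legitimate: the root projections are polynomials in $L$, hence preserve the $L$-invariant ideal $\mathfrak{n}(\mathfrak{g})$) and then force a contradiction from a growing tower of frequencies $2\beta_j,3\beta_j,\dots$ inside the nilradical. But the tower never materializes, and it truncates \emph{consistently}, not contradictorily. Indeed, two-step nilpotency (Proposition \ref{strucnilr}) gives $[\mathfrak{n}(\mathfrak{g}),\mathfrak{n}(\mathfrak{g})]\subset C(\mathfrak{n}(\mathfrak{g}))\subset \Ker(L)\subset A_1$, while Proposition \ref{the.eigen2} gives $[V_i,V_j]\subset A_2$ for $i\neq j$, $i,j\geq 1$; since $A_1\cap A_2=0$ you conclude $[N_i,N_j]=0$ for $i\neq j$, and comparing root components shows the $V_{2\beta_i}$-part of $[N_i,N_i]$ vanishes, so $[N_i,N_i]\subset N_0\cap C(\mathfrak{n}(\mathfrak{g}))$. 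Every constraint you invoke — frequency rules, vanishing of triple brackets, and the orthogonality relations of Proposition \ref{the.eigen4} — is therefore satisfied with arbitrary nonzero spaces $N_j$, $j\geq 1$. The extreme case makes this vivid: Proposition \ref{strucnilr} allows $\mathfrak{n}(\mathfrak{g})$ to be abelian, in which case all your algebraic relations are vacuous, yet $L$ could a priori act on $\mathfrak{n}(\mathfrak{g})$ with eigenvalues $\pm\beta\,{\bf i}$, $\beta\neq 0$, as in the motion algebra $\mathbb{R}X\ltimes\mathbb{R}^2$. Ruling this out is precisely the content of the conjecture, and it cannot follow from bracket bookkeeping alone; it must use the geometry — constant length of $X$ on all of $M$, positive definiteness of $g$, the geodesic orbit property at every point — in some way your plan never specifies. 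What the paper actually establishes in this direction is only the following: $C(\mathfrak{n}(\mathfrak{g}))\subset\Ker(L)\subset A_1$ always (Proposition \ref{the.eigen3}), and the case $X\in\mathfrak{n}(\mathfrak{g})$ is fully settled (Theorem \ref{on_nilrad1} together with Proposition \ref{goabdev}: then $X\in C(\mathfrak{n}(\mathfrak{g}))$ and in fact $A_1=\mathfrak{g}$). For $X\notin\mathfrak{n}(\mathfrak{g})$ the question of whether $A_2\cap\mathfrak{n}(\mathfrak{g})=0$ remains open, and your proposal does not close it.
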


\begin{theorem}\label{on_nilrad1}
For a geodesic orbit space $(G/H, g)$, we consider any  $X \in \mathfrak{n}(\mathfrak{g})$.
Then the following conditions are equivalent:

1) $X$ is in the center $C(\mathfrak{n}(\mathfrak{g}))$ of $\mathfrak{n}(\mathfrak{g})$;

2) $X$ has constant length on $(G/H, g)$.
\end{theorem}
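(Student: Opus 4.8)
The plan is to prove the two implications separately. The implication $1)\Rightarrow 2)$ is immediate from the machinery already developed: if $X\in C(\mathfrak{n}(\mathfrak{g}))$, then $X$ generates an abelian ideal $\mathbb{R}\cdot X\subset C(\mathfrak{n}(\mathfrak{g}))$ of $\mathfrak{g}$ (the center of the nilradical is always an abelian ideal of $\mathfrak{g}$), so Theorem \ref{ideal2} applies directly and gives that $X$ has constant length on the geodesic orbit space $(G/H,g)$. Thus the substantive content lies entirely in the converse $2)\Rightarrow 1)$.

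For $2)\Rightarrow 1)$, suppose $X\in \mathfrak{n}(\mathfrak{g})$ has constant length; I must show $X$ commutes with all of $\mathfrak{n}(\mathfrak{g})$. The natural approach is to exploit the spectral constraints on $L=\ad(X)$ together with the two-step structure of the nilradical from Proposition \ref{strucnilr}. First I would observe that since $X\in \mathfrak{n}(\mathfrak{g})$, the operator $L=\ad(X)$ is nilpotent, so its spectrum is $\{0\}$, forcing $A_2=\Imm(L^2)=0$ and $A_1=\Ker(L^2)=\mathfrak{g}$; in particular $L^2=0$ on $\mathfrak{g}$. The key point is then to restrict attention to the action of $L$ on the nilradical itself. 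Because $\mathfrak{n}(\mathfrak{g})$ is at most two-step nilpotent, $[\mathfrak{n}(\mathfrak{g}),\mathfrak{n}(\mathfrak{g})]\subset C(\mathfrak{n}(\mathfrak{g}))\subset \Ker(L)$, so for any $Y\in\mathfrak{n}(\mathfrak{g})$ we already know $L(Y)=[X,Y]\in[\mathfrak{n}(\mathfrak{g}),\mathfrak{n}(\mathfrak{g})]$ lies in $\Ker(L)$, consistent with $L^2=0$.

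The heart of the matter is to leverage condition (\ref{e00}) of Lemma \ref{le1} in the form used in Proposition \ref{decomp2}: for $V=[X,Y]\in[X,\mathfrak{g}]$ we have $g([X,V],V)=0$, i.e. $g([X,[X,Y]],[X,Y])=0$ on $M$. I would try to show that the constant-length hypothesis forces $[X,Y]=0$ for every $Y\in\mathfrak{n}(\mathfrak{g})$ by a pairing argument. Concretely, for $Y\in\mathfrak{n}(\mathfrak{g})$ set $V=[X,Y]\in C(\mathfrak{n}(\mathfrak{g}))$. Since $V\in C(\mathfrak{n}(\mathfrak{g}))$ is itself an abelian-ideal element, Theorem \ref{ideal2} gives that $V$ has constant length and, more usefully, that $g(V,V)\equiv\const$ and $g(V,W)\equiv\const$ for $W\in C(\mathfrak{n}(\mathfrak{g}))$. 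The plan is to combine this with equation (\ref{e0}), namely $g([W,X],X)=0$, applied to suitable $W$, and to use the geodesic orbit property via Lemma \ref{newcrit} to produce enough Killing fields agreeing with $V$ at a point to conclude $g_x(V,V)=0$, hence $V=[X,Y]=0$.

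The main obstacle I anticipate is precisely this last step: extracting $V=0$ from the scalar identities. The difficulty is that $g([X,V],V)=0$ does not directly bound $g(V,V)$, so I expect the real work to be a clever use of the GO-condition — choosing, for the tangent vector $V(x)$ at a point $x$, a Killing field $Z$ with $Z(x)=V(x)$ and $g_x([U,Z],Z)=0$ for all $U$, as in the proof of Theorem \ref{ideal2}, and then transferring the critical-point information from $Z$ back to $V$ using that their difference vanishes at $x$. I would aim to show $g_x([X,V],V)$ and related brackets vanish in a way that forces $V(x)=0$ for all $x$, so that $V\equiv 0$ as a Killing field and hence $[X,Y]=0$ for all $Y\in\mathfrak{n}(\mathfrak{g})$, which is exactly the statement $X\in C(\mathfrak{n}(\mathfrak{g}))$. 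Getting the bookkeeping of base points and the GO-auxiliary fields to cooperate, rather than any single algebraic identity, is where I expect the proof to require care.
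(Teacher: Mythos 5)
Your proof of $1)\Rightarrow 2)$ is exactly the paper's: $C(\mathfrak{n}(\mathfrak{g}))$ is an abelian ideal of $\mathfrak{g}$, so Theorem~\ref{ideal2} applies. The converse, however, you leave as a plan with an unresolved core, and the plan points away from the actual argument. The missing idea is simply to apply identity (\ref{e00}) of Lemma~\ref{le1} with the \emph{same} element in both slots, taking $Y=Z\in\mathfrak{n}(\mathfrak{g})$: since $X,Z\in\mathfrak{n}(\mathfrak{g})$ and $\mathfrak{n}(\mathfrak{g})$ is at most two-step nilpotent by Proposition~\ref{strucnilr}, we have $[Z,X]\in[\mathfrak{n}(\mathfrak{g}),\mathfrak{n}(\mathfrak{g})]$, hence $[Z,[Z,X]]=0$, and (\ref{e00}) collapses to
$$
g([Z,X],[Z,X])=g([Z,[Z,X]],X)+g([Z,X],[Z,X])=0
$$
at every point of $M$. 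Thus the Killing field $[Z,X]$ has identically zero length, so it vanishes, and by injectivity of the identification (\ref{kfaction}) we get $[Z,X]=0$ in $\mathfrak{g}$ for every $Z\in\mathfrak{n}(\mathfrak{g})$, i.e. $X\in C(\mathfrak{n}(\mathfrak{g}))$. This is precisely assertion 2) of Proposition~\ref{the.eigen0} (with $[[X,Z],Z]=0$), which you could have cited verbatim. Note also that Lemma~\ref{le1} yields (\ref{e00}) pointwise on \emph{any} Riemannian manifold once $X$ has constant length; the geodesic orbit hypothesis enters this direction only through Proposition~\ref{strucnilr}.

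The route you actually sketch cannot close the gap. The identity you single out from Proposition~\ref{decomp2}, namely $g([X,[X,Y]],[X,Y])=0$, is vacuous in this setting: as you yourself observe, $L=\ad(X)$ is nilpotent, so Theorem~\ref{the.eigen1} forces $A_2=0$ and $L^2=0$, and the identity reads $0=0$. The point is the asymmetry of (\ref{e00}): one must differentiate $g([Z,X],X)\equiv 0$ along $Z$, not along $X$ or $[X,Y]$, because that is what produces the term $g([Z,X],[Z,X])$ --- exactly the quantity you feared could not be controlled --- while two-step nilpotency annihilates the companion term $g([Z,[Z,X]],X)$. Consequently, the auxiliary-Killing-field bookkeeping from the proof of Theorem~\ref{ideal2} that you propose to redeploy (choosing $Z$ with $Z(x)=V(x)$ and transferring critical-point data) is not needed at all in this direction, and as written it remains a hope rather than a proof: you never derive $V(x)=0$ from the stated identities.
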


\begin{proof} $1) \Rightarrow 2)$. Since the center $C(\mathfrak{n}(\mathfrak{g}))$ is an abelian ideal in $\mathfrak{g}$,
then $X$ has constant length due to Theorem \ref{ideal2}.

$2) \Rightarrow 1)$. Since $X\in \mathfrak{n}(\mathfrak{g})$ and $\mathfrak{n}(\mathfrak{g})$ is at most two step nilpotent by Proposition \ref{strucnilr},
then $[Z,[Z,X]]=0$ for any $Z \in \mathfrak{n}(\mathfrak{g})$. Now, by
Lemma \ref{le1}, we have
$$
g([Z,X],[Z,X])=g([Z,[Z,X]],X)+g([Z,X],[Z,X])=0,
$$
hence $[Z,X]=0$ for any $Z \in \mathfrak{n}(\mathfrak{g})$. Consequently, $X\in C(\mathfrak{n}(\mathfrak{g}))$.
\end{proof}

\begin{corollary}
Under conditions of Theorem \ref{on_nilrad1}, any abelian ideal $\mathfrak{a}$ in $\mathfrak{g}$ is in $C(\mathfrak{n}(\mathfrak{g}))$. In particular,
$C(\mathfrak{n}(\mathfrak{g}))$ is a maximal abelian ideal  in $\mathfrak{g}$ by inclusion.
\end{corollary}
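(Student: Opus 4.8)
The plan is to reduce the statement entirely to the two theorems already established: Theorem~\ref{ideal2}, which says that abelian ideals on a geodesic orbit space consist of Killing fields of constant length, and Theorem~\ref{on_nilrad1}, which says that a constant-length field lying in the nilradical must in fact lie in its center. The argument is then a direct composition of these two facts, with only a couple of elementary structural observations needed to connect them.

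First I would observe that an abelian ideal $\mathfrak{a}$ is in particular a nilpotent ideal, so by maximality of the nilradical we have $\mathfrak{a}\subset \mathfrak{n}(\mathfrak{g})$. Next, since $(G/H,g)$ is a geodesic orbit space and $\mathfrak{a}$ is an abelian ideal of $\mathfrak{g}=\Lie(G)$, Theorem~\ref{ideal2} guarantees that every $X\in \mathfrak{a}$ has constant length on $(G/H,g)$. Now fix such an $X$: it lies in $\mathfrak{n}(\mathfrak{g})$ and has constant length, so the implication $2)\Rightarrow 1)$ of Theorem~\ref{on_nilrad1} yields $X\in C(\mathfrak{n}(\mathfrak{g}))$. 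As $X\in \mathfrak{a}$ was arbitrary, this gives $\mathfrak{a}\subset C(\mathfrak{n}(\mathfrak{g}))$, which is the first assertion.

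For the claim that $C(\mathfrak{n}(\mathfrak{g}))$ is a maximal abelian ideal, I would recall that $C(\mathfrak{n}(\mathfrak{g}))$ is itself an ideal of $\mathfrak{g}$ (as already used in Proposition~\ref{the.eigen3}) and is abelian, being the center of $\mathfrak{n}(\mathfrak{g})$. Thus $C(\mathfrak{n}(\mathfrak{g}))$ is an abelian ideal that, by the first part, contains every abelian ideal of $\mathfrak{g}$; hence it is the largest such ideal and, in particular, maximal by inclusion.

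There is no serious obstacle here, since the content is carried by the cited theorems and the argument is just their combination. The only points that deserve a word of justification are the elementary facts that an abelian ideal is nilpotent (and so contained in the nilradical) and that the center of the nilradical is an ideal of $\mathfrak{g}$; the latter follows because $\mathfrak{n}(\mathfrak{g})$ is a characteristic ideal, so $\ad(X)$ preserves it for every $X\in\mathfrak{g}$, and the Jacobi identity then forces $[\mathfrak{g}, C(\mathfrak{n}(\mathfrak{g}))]\subset C(\mathfrak{n}(\mathfrak{g}))$.
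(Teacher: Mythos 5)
Your proof is correct and takes essentially the same route as the paper: an abelian ideal is nilpotent, hence $\mathfrak{a}\subset \mathfrak{n}(\mathfrak{g})$; Theorem~\ref{ideal2} gives constant length for every element of $\mathfrak{a}$; and the implication $2)\Rightarrow 1)$ of Theorem~\ref{on_nilrad1} then places $\mathfrak{a}$ inside $C(\mathfrak{n}(\mathfrak{g}))$, with maximality following since $C(\mathfrak{n}(\mathfrak{g}))$ is itself an abelian ideal. You even silently repair a slip in the paper's printed proof, which cites Proposition~\ref{strucnilr} (Gordon's structure result on the nilradical) where the fact actually used is Theorem~\ref{ideal2}.
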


\begin{proof}
It is clear that $\mathfrak{a}$ is a nilpotent ideal in $\mathfrak{g}$, hence $\mathfrak{a}\subset \mathfrak{n}(\mathfrak{g})$.
By Proposition \ref{strucnilr}, $\mathfrak{a}$ consists of Killing fields of constant length,
hence, $\mathfrak{a}\subset C(\mathfrak{n}(\mathfrak{g}))$ by Theorem \ref{on_nilrad1}.
\end{proof}

\bigskip

\begin{remark}\label{rem7}
It should be recalled that there are many examples of geodesic orbit nilmanifolds~\cite{Gor96}.
Therefore, Theorems \ref{ideal2} and \ref{on_nilrad1} give non-trivial examples $X$ of KVFCL on  $(M=G/H,g)$, where
$X \in C(\mathfrak{n}(\mathfrak{g}))$.
For any such example, the operator $\ad(X):\mathfrak{g}\rightarrow \mathfrak{g}$ is non semisimple, since it is nilpotent.
In this case, $A_1=\mathfrak{g}$ and $L^2=(\ad(X))^2=0$.
For semisimple $\mathfrak{g}$, there is no counterexample for Conjecture  \ref{conjec2}.
\end{remark}

Let us recall Problem~2 in \cite{Nik2015}: {\it Classify geodesic orbit Riemannian spaces with nontrivial Killing vector fields
of constant length}. Now, this problem is far from being resolved. We have one modest result in this direction.

\begin{prop}\label{goabdev}
Let $(G/H, g)$ be a geodesic orbit space and $X\in \mathfrak{g}=\operatorname{Lie}(G)$. Then the following conditions are equivalent:

1) $X$ has constant length on $(G/H, g)$ and $A_1=\Ker(L^2)=\mathfrak{g}$;

2) $X$ is in the center $C(\mathfrak{n}(\mathfrak{g}))$ of $\mathfrak{n}(\mathfrak{g})$.
\end{prop}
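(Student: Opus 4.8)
The plan is to prove the equivalence of the two conditions in Proposition \ref{goabdev} by establishing each implication separately, leaning heavily on the structural results already assembled in the excerpt. The direction $2) \Rightarrow 1)$ should be essentially immediate: if $X \in C(\mathfrak{n}(\mathfrak{g}))$, then $X$ has constant length by Theorem \ref{ideal2} (since $C(\mathfrak{n}(\mathfrak{g}))$ is an abelian ideal), and moreover Remark \ref{rem7} already records that for such $X$ one has $\ad(X)$ nilpotent with $L^2 = 0$, hence $A_1 = \Ker(L^2) = \mathfrak{g}$. So the content of that implication is packaged in results proved earlier, and I would simply cite them.

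The substantive direction is $1) \Rightarrow 2)$. Here I assume $X$ has constant length and $A_1 = \mathfrak{g}$, and I must show $X \in C(\mathfrak{n}(\mathfrak{g}))$. First I would invoke Proposition \ref{the.eigen4.5}: since $A_1 = \mathfrak{g}$, it gives at once that $X \in \mathfrak{n}(\mathfrak{g})$. This is the key reduction, because once $X$ is known to lie in the nilradical of a geodesic orbit space, Theorem \ref{on_nilrad1} applies directly: it asserts that for $X \in \mathfrak{n}(\mathfrak{g})$, having constant length on $(G/H,g)$ is equivalent to lying in $C(\mathfrak{n}(\mathfrak{g}))$. Thus from $X \in \mathfrak{n}(\mathfrak{g})$ and constant length we conclude $X \in C(\mathfrak{n}(\mathfrak{g}))$, which is exactly condition 2).

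In outline the steps are: (i) for $2) \Rightarrow 1)$, apply Theorem \ref{ideal2} for constant length and note $L^2 = 0$ so $A_1 = \mathfrak{g}$, as in Remark \ref{rem7}; (ii) for $1) \Rightarrow 2)$, use Proposition \ref{the.eigen4.5} to get $X \in \mathfrak{n}(\mathfrak{g})$ from $A_1 = \mathfrak{g}$; and (iii) apply Theorem \ref{on_nilrad1} to upgrade membership in $\mathfrak{n}(\mathfrak{g})$ plus constant length to membership in $C(\mathfrak{n}(\mathfrak{g}))$. The proof is therefore a short assembly of already-established theorems rather than a fresh computation. I do not anticipate a genuine obstacle, since every nontrivial ingredient is available; the only point requiring a moment's care is recognizing that the hypothesis $A_1 = \mathfrak{g}$ is precisely the condition $(\ad(X))^2 = 0$ that feeds Proposition \ref{the.eigen4.5}, which is where the geodesic orbit assumption (through Proposition \ref{strucnilr} and Theorem \ref{on_nilrad1}) does the real work of pinning $X$ down to the center of the nilradical rather than merely to the nilradical.
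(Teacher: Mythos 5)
Your proposal is correct and follows essentially the same route as the paper: the direction $1) \Rightarrow 2)$ is exactly the paper's argument (Proposition \ref{the.eigen4.5} to place $X$ in $\mathfrak{n}(\mathfrak{g})$, then Theorem \ref{on_nilrad1}), and the direction $2) \Rightarrow 1)$ rests on the same ingredients. The only point to tighten is that where you cite Remark \ref{rem7} for $L^2=0$, the paper instead writes out the one-line verification $L^2(Y)=[X,[X,Y]]\in [C(\mathfrak{n}(\mathfrak{g})),\mathfrak{n}(\mathfrak{g})]=0$, using that $\mathfrak{n}(\mathfrak{g})$ is an ideal so $[X,Y]\in \mathfrak{n}(\mathfrak{g})$; since the remark asserts this fact without proof, you should include this computation rather than the citation to avoid circularity.
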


\begin{proof} $1) \Rightarrow 2)$.
By Proposition \ref{the.eigen4.5}, we get $X \in \mathfrak{n(g)}$.
Hence, $X \in C(\mathfrak{n}(\mathfrak{g}))$ by Theorem~\ref{on_nilrad1}.

$2) \Rightarrow 1)$.
By Theorem~\ref{on_nilrad1}, $X$ has constant length on $(G/H, g)$.
Since $\mathfrak{n}(\mathfrak{g})$ is an ideal in~$\mathfrak{g}$, then $[X,Y]\in \mathfrak{n}(\mathfrak{g})$ and
$L^2(Y)=[X,[X,Y]]\in [C(\mathfrak{n}(\mathfrak{g})),\mathfrak{n}(\mathfrak{g})]=0$ for all $Y \in \mathfrak{g}$.
\end{proof}

\vspace{3mm}

{\bf Acknowledgements.}
The author is indebted to Prof. V.N.~Berestovskii for helpful discussions concerning this paper.

\bibliographystyle{amsunsrt}

\vspace{10mm}

\end{document}